\newtheorem{ex}{Example}[section]
\newtheorem{theorem}{Theorem}[section]
\newtheorem{lemma}{Lemma}[section]
\newtheorem{proposition}{Proposition}[section]
\newtheorem{remark}{Remark}[section]
\begin{document}

\markboth{N. Ito}
{Jones polynomials of long virtual knots}

\title{Jones polynomials of long virtual knots}

\author{NOBORU ITO}

\address{Department of Mathematics, Waseda University, 3-4-1 Okubo, Shinjuku-ku, Tokyo 169-8555, Japan\\
noboru@moegi.waseda.jp}

\begin{abstract}
This paper defines versions of the Jones polynomial and Khovanov homology by using several maps from the set of Gauss diagrams to its variant.  Through calculation of some examples, this paper also shows that these versions behave differently from the original ones.
\end{abstract}
\keywords{Knots; Jones polynomials; Gauss diagrams; Khovanov homology}
\maketitle

\section{Introduction}\label{intro}
Knots, which are circles embedded in thickened surfaces, are often treated as the virtual knots introduced by Kauffman \cite{kauffman}.  Virtual knots with base points are regarded as long virtual knots, since a circle less one point is homeomorphic to a line.  

On the other hand, a Gauss diagram is a circle with chords, where the preimages of each double point of the immersion are connected by the chords.  Virtual knots are nothing but equivalence classes of Gauss diagrams.  We can place some information on the circle and chords of a Gauss diagram.  

This paper considers maps between Gauss diagrams, and it is possible to produce some versions of a single knot invariant.  In particular, there is a simple way to define invariants for long virtual knots thorough Gauss diagrams.  In this paper, we consider versions of the Jones polynomial in terms of invariants of long virtual knots.  We also see that this approach is effective for Khovanov homology.

The plan of this paper is as follows: Sec. \ref{functor_nano} gives a precise definition of long virtual knots and the corresponding Gauss diagrams.  Sec. \ref{ver_jones} obtains definitions of the maps between Gauss diagrams and defines versions of the Jones polynomial.  We see in Sec. \ref{app_kh} that the same approach is good for Khovanov homology.  

\section{Long virtual knots and their presentations as Gauss diagrams}\label{functor_nano}
Virtual knot theory was introduced by Kauffman \cite{kauffman} and virtual knots are often treated as Gauss diagrams.  
\subsection{Knots, knot diagrams, long knots, long knot diagrams, and Gauss diagrams}
A {\it{knot}} is a circle smoothly embedding into $\mathbb{R}^{3}$ and a {\it{long knot}} is a smooth embedding $\mathbb{R}$ $\to$ $\mathbb{R}^{3}$.  These are often represented by {\it{knot diagrams}} or {\it{long knot diagrams}}, which are images of generic immersions of the circle into the plane adding the information on overpasses and underpasses at double points, as shown in Figs. \ref{knot_and_long} (a) and (b).  A long knot is often identified as a knot with a point, called a {{base point}}, on the circle.  Its diagram is presented as a knot diagram with a base point on curves distinct from the double points (Fig. \ref{knot_and_long} (c)).  
\begin{figure}
\begin{picture}(0,0)
\put(60,10){(a)}
\put(160,10){(b)}
\put(295,10){(c)}
\end{picture}
\includegraphics[width=12cm]{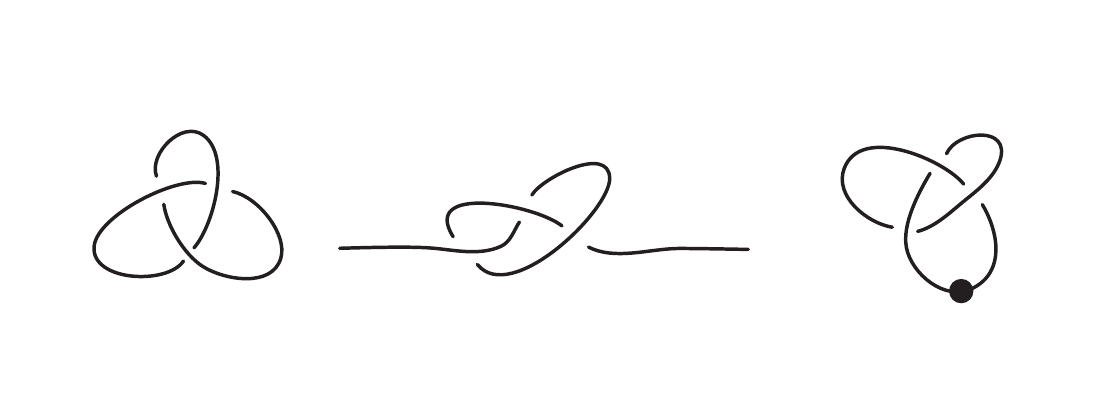}
\caption{(a) Knot diagram.  (b) Long knot diagram.  (c) Knot diagram with base point.}\label{knot_and_long}
\end{figure}
In this paper, we treat knot diagrams with finite double points only.  As is well known, two knots are isotopic knots if related by a finite sequence of {\it{Reidemeister moves}}, which are local moves on knot diagrams as shown in Fig. \ref{reidemeister_m}.  
\begin{figure}
\begin{picture}(0,0)
\put(50,38){$\Omega_1a$}
\put(104,38){$\Omega_1b$}
\put(195,37){$\Omega_2$}
\put(292,36){$\Omega_3$}
\end{picture}
\includegraphics[width=12cm]{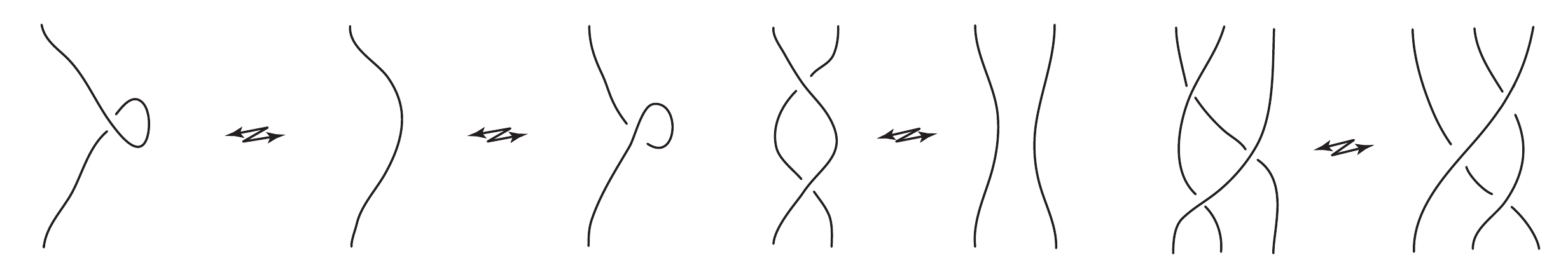}
\caption{Reidemeister moves.  The local replacements on the neighborhoods are drawn, and the exteriors of the neighborhoods are the same for both diagrams of each move.}\label{reidemeister_m}
\end{figure}
If necessary, we add an adjective such as {\it{classical}} for referring to the knots defined above and keep this role for other objects: long knots, knot diagrams, and long knot diagrams.  

Every generic immersion of a circle into the plane fixes a {\it{Gauss diagram}} that is a circle with chords, where the preimages of each double point of the immersion are connected by the chords (Fig. \ref{diag_to_gauss}).  {\it{Oriented Gauss diagrams}} are considered up to orientation preserving homeomorphism underlying circles, and the orientations imply those of knots.  In this paper, the underlying circle of every oriented Gauss diagram has counterclockwise orientation.  In the rest of this paper, unless otherwise specified, we adopt oriented Gauss diagrams that are simply called Gauss diagrams.  To recover a knot up to isotopy from a Gauss diagram, we ascribe signs and arrows for every chord.  The sign of a chord is defined as the local writhe number of the corresponding double point, and the arrow of a chord is oriented from the upper branch to the lower branch.  
\begin{figure}
\includegraphics[width=13cm]{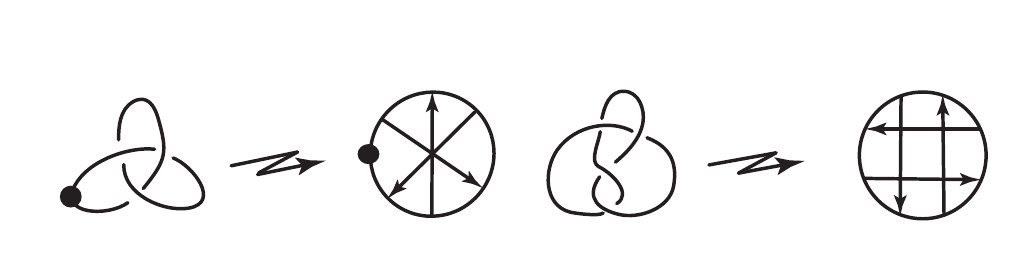}
\begin{picture}(0,0)
\put(-40,65){\tiny$+$}
\put(-12,60){\tiny$+$}
\put(-32,34){\tiny$+$}
\put(146,67){\tiny$-$}
\put(168,53){\tiny$+$}
\put(153,30){\tiny$-$}
\put(131,45){\tiny$+$}
\end{picture}
\caption{A long knot diagram and a knot diagram are encoded by Gauss diagrams.  }\label{diag_to_gauss}
\end{figure} 
In the same way, we define Gauss diagrams of long knot diagrams as in Fig. \ref{diag_to_gauss}.  
\subsection{Virtual knots, virtual knot diagrams, long virtual knots, and long virtual knot diagrams}\label{virtual_sec}
A {\it{virtual knot}}, introduced by Kauffman \cite{kauffman}, is defined as follows: A {\it{virtual knot diagram}} is a smooth immersion of the circle into the plane such that all singular points are transversal double points.  These double points are divided into real crossing points and virtual crossing points, where real crossing points have information on overpasses and underpasses as for the classical knot diagrams shown in Fig. \ref{virtual_crossing}.  
\begin{figure}
\begin{center}
\begin{picture}(0,0)
\put(60,0){(a)}
\put(138,0){(b)}
\put(220,0){(c)}
\end{picture}
\includegraphics[width=10cm]{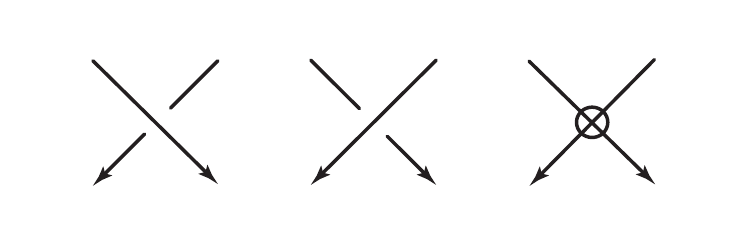}
\caption{(a), (b): Real crossings.  (c): Virtual crossing.  }\label{virtual_crossing}
\end{center}
\end{figure}
A branch consisting of a virtual crossing is not divided into an overpass and an underpass.  {\it{Virtual knots}} are the set of virtual knot diagrams divided by Reidemeister moves and the {\it{virtual moves}} shown in Fig. \ref{virtual_moves}.  
\begin{figure}
\begin{center}
\includegraphics[width=12cm]{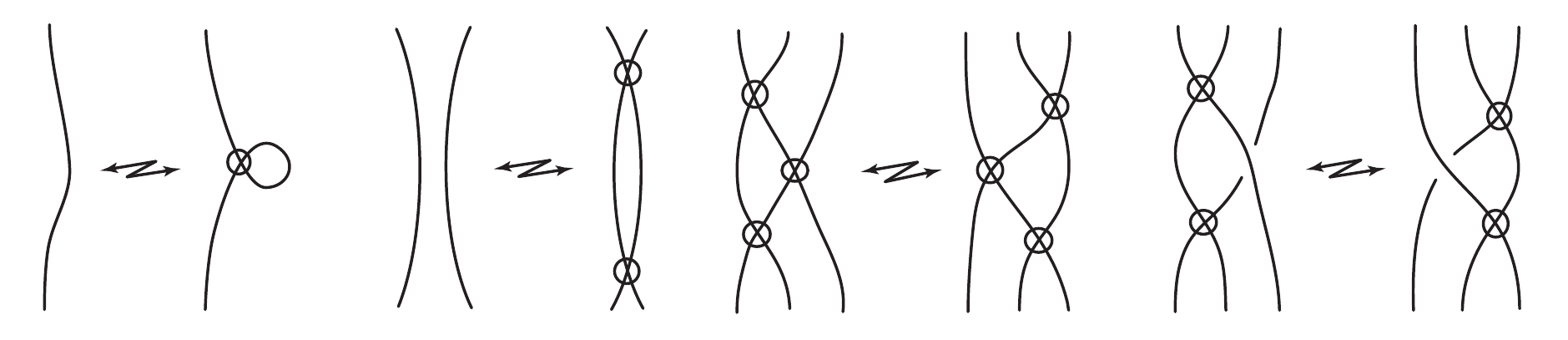}
\caption{Virtual moves.}\label{virtual_moves}
\end{center}
\end{figure}
For virtual knots, the following fact was proved by Goussarov, Polyak, and Viro \cite{gpv} using group systems:
\begin{theorem}[Goussarov, Polyak, Viro]
Virtually isotopic classical knots are isotopic.  
\end{theorem}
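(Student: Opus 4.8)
The plan is to attach to every virtual knot diagram a \emph{group system}---a group together with a distinguished meridian element and a distinguished longitude element---to show that this triple is unchanged by the Reidemeister moves and by the virtual moves, and then to observe that for a classical diagram the triple is exactly the classical peripheral group system $(\pi_1(\mathbb{R}^3\setminus K),m,\ell)$. Since the peripheral group system is a complete invariant of classical knots, two classical knots that are virtually isotopic must be isotopic.

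First I would define the group. Deleting from a virtual knot diagram the short underpassing branches at the real crossings cuts the diagram into arcs; a virtual crossing does not interrupt an arc, since a virtual branch is neither an overpass nor an underpass. Assign a generator $x_i$ to each arc. At each real crossing, where the overarc carries $x_k$ and the two ends of the broken arc carry $x_i,x_j$, impose the Wirtinger relation $x_j = x_k^{\varepsilon}x_i x_k^{-\varepsilon}$, with $\varepsilon=\pm1$ the sign of the crossing, and let $G$ be the group so presented. Choose one arc and let its generator $m$ be the meridian. Traversing the knot from a basepoint on that arc and recording $x_k^{\varepsilon}$ each time one passes underneath an overarc $x_k$ yields a word $\ell\in G$, the longitude. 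In the classical case one would normalize by $m^{-w}$, where $w$ is the writhe, so that $\ell$ commutes with $m$; in the virtual case no such commutation holds and no normalization is needed, so the unnormalized $\ell$ will do.

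Next I would verify invariance. Under $\Omega_1 a$, $\Omega_1 b$, $\Omega_2$, $\Omega_3$ the presentation changes only by Tietze transformations, in the familiar Wirtinger bookkeeping, and the longitude word changes only by insertion of a generator together with its inverse, so $(G,m,\ell)$ is unchanged up to isomorphism. The virtual moves involve no real crossing, so they neither add nor remove generators or relations, and they do not alter which overarcs are met when traversing the knot; hence they leave $(G,m,\ell)$ literally unchanged after relabelling the arcs. This step is routine but is where the bulk of the case checking lives.

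Finally, for a classical knot diagram of a knot $K\subset\mathbb{R}^3\subset S^3$, the construction above is the Wirtinger presentation, so $G\cong\pi_1(S^3\setminus K)$ with $m$ a meridian and $\ell$ the preferred longitude. Knot complements are Haken, so by Waldhausen's theorem an isomorphism of group systems $(\pi_1(S^3\setminus K_1),m_1,\ell_1)\to(\pi_1(S^3\setminus K_2),m_2,\ell_2)$ is induced by a homeomorphism of complements carrying meridian to meridian and longitude to longitude; filling in along the meridians then recovers $(S^3,K_1)$ and $(S^3,K_2)$, so $K_1$ and $K_2$ are isotopic. Combined with the invariance established above, this proves the theorem. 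I expect the main obstacle to be the genuinely invariant definition of the longitude through virtual crossings (a framing-type subtlety); granting that, the remaining weight of the argument rests on the classical rigidity of Haken manifolds, which one cites rather than proves here.
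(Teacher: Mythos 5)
The paper itself gives no argument for this theorem: it simply attributes it to Goussarov, Polyak, and Viro and records that their proof uses group systems, which is exactly the route you take (extend the Wirtinger presentation with a meridian--longitude pair to virtual diagrams, check invariance under the Reidemeister and virtual moves, then invoke Waldhausen rigidity for classical knot exteriors and fill along meridians). So your overall plan is the intended one.

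There is, however, one concrete error in your invariance step. The \emph{unnormalized} longitude is not invariant under the first Reidemeister move: adding a kink inserts a \emph{single} letter (the generator of the arc at the new crossing, a conjugate of $m$) into the word read off the underpasses, not a generator together with its inverse, so the element $\ell \in G$ changes. Already for the unknot, the one-crossing kinked diagram gives $(G,m,\ell) \cong (\mathbb{Z}, m, m^{\pm 1})$ while the round diagram gives $(\mathbb{Z}, m, 1)$. Your stated reason for dropping the normalization (that $\ell$ need not commute with $m$ in the virtual setting) is beside the point: the factor $m^{-w}$ is what makes $\ell$ invariant under $\Omega_1 a$ and $\Omega_1 b$, not what makes it commute. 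The fix is standard and works verbatim for virtual diagrams: take $\ell\, m^{-w}$ with $w$ the writhe counted over the real crossings only; $w$ is unchanged by the virtual moves and by $\Omega_2$, $\Omega_3$, and its change under $\Omega_1$ cancels the inserted letter. By contrast, the ``framing through virtual crossings'' issue you flag at the end is not actually a problem, since virtual crossings contribute nothing to the Wirtinger data or to the longitude word. With that repair, and with the usual care that a meridian- and longitude-preserving isomorphism yields an orientation-preserving homeomorphism of pairs before concluding isotopy, your argument is the Goussarov--Polyak--Viro proof the paper cites.
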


Here, we enhance the definition of knot diagrams and long knot diagrams for treating virtual knots as classical knots following works by Carter, Kamada, and Saito \cite{cks} and N. Kamada and S. Kamada \cite{kk} (see also Kauffman \cite{kauffman} and Goussarov, Polyak, and Viro \cite{gpv}).  In the rest of this paper, objects such as knots or knot diagrams (i.e., containing classical knots, virtual knots, classical long knots, or long virtual knots) are regarded as oriented, unless confusion is likely to occur.  {\it{Knot diagrams on surfaces}} are images of generic immersions of the circle into an oriented surface adding information on overpasses and underpasses at double points.  {\it{Long knot diagrams on surfaces}} are knot diagrams on surfaces with base point on curves distinct from the double points.  As is well known, {\it{virtual knots}} (resp. {\it{long virtual knots}}) are {\it{stable equivalence}} classes of knot diagrams (resp. long virtual knot diagrams) on surfaces.  The definition of the stable equivalence is as follows: Two knot diagrams on surfaces that are images of generic immersions are stably equivalent if they can be replaced by a finite sequence of {\it{stable homeomorphisms}} and Reidemeister moves in the ambient surfaces.  Two images of generic immersions are stably homeomorphic if there is a homeomorphism of their regular neighborhoods in the ambient surfaces that maps the first diagram onto the second one and preserves the overcrossings and undercrossings as well as the orientations of the surface and the immersed curve.  Two long knot diagrams on surfaces are stably equivalent if they can be replaced by a finite sequence of stable homeomorphisms preserving the base point and Reidemeister moves in the ambient surfaces away from the base point.  In particular, we now have a purely combinatorial proof that there are injective maps from classical knots (resp. long knots) to virtual knots (resp. long virtual knots) (cf. Turaev \cite{turaev2}).

\subsection{Gauss diagrams for virtual knots and long virtual knots.}
Gauss diagrams of virtual knots and long virtual knots are defined by knot diagrams and long knot diagrams on surfaces in the same way as for classical knot diagrams (resp. classical long knot diagrams) that are generic immersions of circles (resp. circles with base points) into the plane.  The alternative definition of Gauss diagrams of virtual knots and long virtual knots is that Gauss diagrams are constructed by using virtual knot diagrams and long virtual knot diagrams on the plane in the same way as for classical knot diagrams, but all virtual crossings are disregarded as shown in Fig. \ref{virtual_gauss}.  
\begin{figure}
\includegraphics[width=13cm]{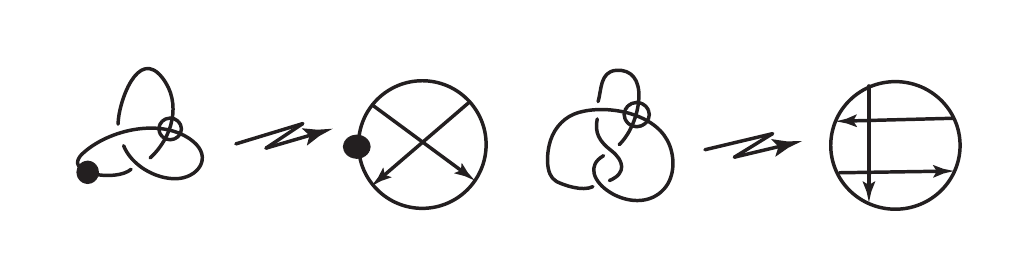}
\begin{picture}(0,0)
\put(-43,68){\tiny$+$}
\put(-14,62){\tiny$+$}
\put(158.5,58){\tiny$+$}
\put(123,48){\tiny$+$}
\put(135,73){\tiny$-$}
\end{picture}
\caption{Gauss diagrams for a long virtual knot and a virtual knot.}\label{virtual_gauss}
\end{figure}
Here, the following important fact \cite[Theorem 1.A]{gpv} should be mentioned: 
\begin{theorem}[Goussarov, Polyak, Viro]\label{gpv_thm}
A Gauss diagram defines a virtual knot diagram up to virtual moves.  
\end{theorem}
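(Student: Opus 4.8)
The plan is to prove the two halves of the assertion separately: \emph{existence} of a virtual knot diagram realizing a prescribed Gauss diagram, and \emph{uniqueness} of that diagram up to virtual moves.

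For existence, given a Gauss diagram $G$ with $n$ chords I would first read off the cyclic word of length $2n$ in which the chord endpoints occur along the underlying circle, together with the sign and the arrow of each chord. I then realize it by the standard band (``ladder'') presentation: place $2n$ short parallel stubs in the plane in the order prescribed by that cyclic word; for the two stubs that are the endpoints of a common chord, join them by a little arc that forms one classical crossing with the prescribed over/under branch and the prescribed writhe sign; finally close up the remaining free ends in cyclic order by embedded arcs, declaring \emph{every} crossing created in this last step (arc with arc, or arc with a crossing strand) to be virtual. By construction the real crossings of the resulting virtual knot diagram, read along the curve with their signs and arrows, reproduce $G$.

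For uniqueness, let $D_1$ and $D_2$ be virtual knot diagrams with the same Gauss diagram $G$. The essential tool is the \emph{detour move}: an arc of a virtual diagram that meets only virtual crossings may be deleted and replaced by any other arc with the same two endpoints, all of whose crossings with the rest of the diagram are then declared virtual; this move is a formal consequence of the virtual Reidemeister moves and the mixed move of Fig.~\ref{virtual_moves} (see Goussarov, Polyak, and Viro \cite{gpv}). I would first normalize each $D_i$: shrink a small disjoint disk around each real crossing so that inside it the diagram is a standard transversal $X$ with the correct sign and over/under data, and note that every crossing outside the union $B$ of these $n$ disks is then virtual, so the part of the diagram outside $B$ consists of arcs meeting only virtual crossings. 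Next, using an ambient isotopy of the plane that drags the $n$ disks (carrying their attached stub-arcs) into a fixed standard position — such an isotopy only ever creates virtual crossings, since no disk is moved through another disk and no strand is moved through a real crossing — and then applying the detour move to the arcs outside $B$ one at a time, I can bring $D_i$ into a normal form that depends only on the combinatorial data of $G$: the $n$ standard $X$'s in standard position, joined by a prescribed system of virtual-only arcs. Since the pattern connecting the boundary points of the disks is precisely the chord--endpoint incidence recorded by $G$, the normal forms of $D_1$ and $D_2$ coincide, and we have connected $D_1$ to $D_2$ by a finite sequence of virtual moves and planar isotopies.

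The main obstacle I anticipate is not the combinatorial bookkeeping but the two places where one must certify that \emph{only} virtual moves (and never a classical Reidemeister move or the forbidden move) are used: first, deriving the detour move from the moves listed in Fig.~\ref{virtual_moves}, and second, verifying in the normalization step that every strand that actually gets moved is an arc free of real crossings. Both are handled by the discipline of keeping, around each real crossing, a fixed tiny disk containing an untouched standard $X$ whose four emanating stubs carry no real crossing, so that everything one manipulates is a virtual-only arc to which the detour move applies; once this is set up, the remaining argument is a routine, if lengthy, sequence of pictures.
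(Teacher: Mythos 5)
The paper does not prove this statement at all: it is quoted verbatim as Theorem~1.A of Goussarov, Polyak, and Viro \cite{gpv} and used as an imported fact, so there is no internal proof to compare yours against. Your sketch is essentially the standard argument from that literature (and from Kauffman's original paper): realize the Gauss diagram by placing the real crossings according to the chord data and declaring all crossings forced by closing up the arcs to be virtual, then prove uniqueness by normalizing both diagrams around small disks containing the real crossings and shuffling the connecting arcs with the detour move, which is itself a consequence of the purely virtual moves together with the mixed move of Fig.~\ref{virtual_moves}. The outline is correct. One point to tighten: an ambient isotopy of the plane never creates or destroys crossings, so the phrase about an isotopy that ``only ever creates virtual crossings'' while dragging the disks into standard position is not literally an isotopy of the diagram --- pushing a disk (with its attached stubs) across other strands is already a sequence of detour moves, so the normalization should be stated entirely in terms of planar isotopy plus detour moves applied to the virtual-only arcs outside the union of the disks. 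Since you do apply the detour move to all such arcs afterwards, this is a presentational wrinkle rather than a gap; with it repaired, your argument is a faithful reconstruction of the proof the paper delegates to \cite{gpv}.
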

Then, a virtual knot (resp. long virtual knot) equals to the corresponding Gauss diagram (resp. Gauss diagram with a base point) considered up to moves that are the counterparts of Reidemeister moves for Gauss diagrams (resp. Gauss diagrams with base points) as shown in Fig. \ref{rel_gauss_a}.  
\begin{figure}
\begin{picture}(0,0)
\put(76,272){$\epsilon$}
\put(313,300){$\epsilon$}
\put(32,200){$- \epsilon$}
\put(35,213){$\epsilon$}
\put(314,199){$- \epsilon$}
\put(319,213){$\epsilon$}
\put(235,105){\small$\zeta$}
\put(232.5,153.3){\small$\eta$}
\put(216.5,133){\small$\epsilon$}
\put(90.3,142){\small$\eta$}
\put(98,150){\small$\epsilon$}
\put(120,104){\small$\zeta$}
\end{picture}
\begin{picture}(0,0)
\put(107,45){$\alpha$}
\put(240,47){$\beta$}
\end{picture}
\includegraphics[width=12cm]{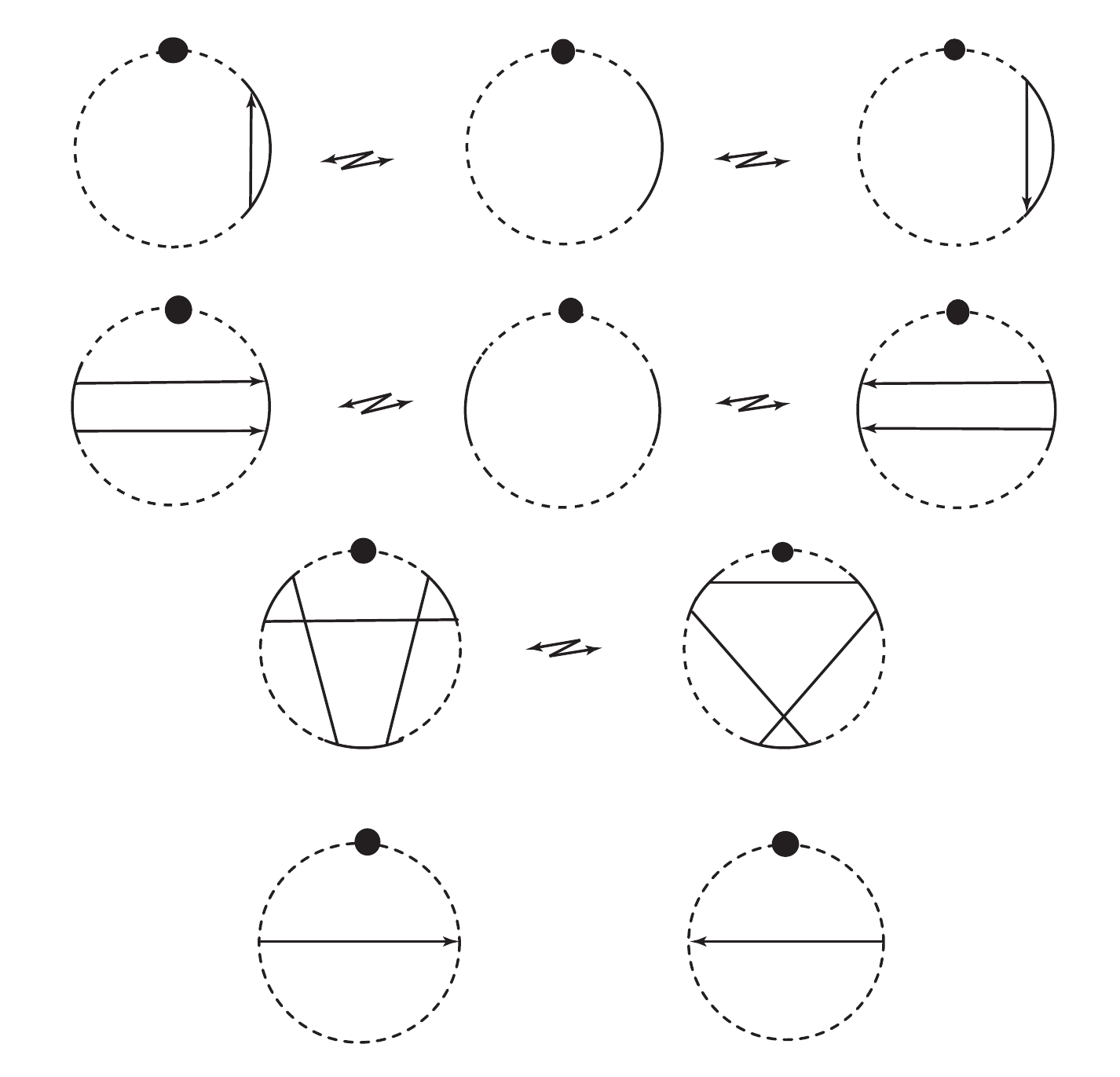}
\caption{Relations of Gauss diagrams (resp. Gauss diagrams with base points) corresponding to Reidemeister moves of virtual knots (resp. long virtual knots) where $\epsilon$, $\eta$, and $\zeta$ are $+$ or $-$, but $(\epsilon, \eta, \zeta)$ is $(\pm, \pm, \pm)$, $(\mp, \mp, \pm)$, or $(\mp, \pm, \pm)$ in the third row.  Directions of chords in the third row, denoted by $\alpha$ and $\beta$ in the fourth row, are defined by Table \ref{table_direction}.  }\label{rel_gauss_a}
\end{figure}
\begin{table}
\begin{center}
{\setlength{\tabcolsep}{10pt}
\begin{tabular}{ccc} \hline
Case & signs & arrows \\ \hline
1 & $(+, +, +)$ & $(\alpha, \alpha, \alpha)$ \\ \hline
2 & $(+, +, +)$ & $(\beta, \beta, \beta)$ \\ \hline
3 & $(-, -, -)$ & $(\alpha, \alpha, \alpha)$ \\ \hline
4 & $(-, -, -)$ & $(\beta, \beta, \beta)$ \\ \hline \hline
5 & $(+, +, -)$ & $(\alpha, \alpha, \beta)$ \\ \hline
6 & $(+, +, -)$ & $(\beta, \beta, \alpha)$ \\ \hline
7 & $(-, -, +)$ & $(\alpha, \alpha, \beta)$ \\ \hline
8 & $(-, -, +)$ & $(\beta, \beta, \alpha)$ \\ \hline \hline
9 & $(+, -, -)$ & $(\alpha, \beta, \beta)$ \\ \hline
10 & $(+, -, -)$ & $(\beta, \alpha, \alpha)$ \\ \hline
11 & $(-, +, +)$ & $(\alpha, \beta, \beta)$ \\ \hline
12 & $(-, +, +)$ & $(\beta, \alpha, \alpha)$ \\ \hline 
\end{tabular}
}
\end{center}
\caption{Rules for the triples of three chords in the third row of Fig. \ref{rel_gauss_a}.  Double lines indicate that we can regard these twelve cases as three groups.}\label{table_direction}
\end{table}
\begin{remark}
A Gauss diagram naturally has the orientation of a circle.  Hence, if we adopt the notion of Gauss diagrams for non-oriented knots, Gauss diagrams should be identified up to given arbitrary orientations.  On the other hand, when we consider an oriented Gauss diagram, the order of trivalent vertices on the Gauss diagram is fixed.  That is why, in this paper, we represent Reidemeister move $\Omega_3$ as the third line of Fig. \ref{rel_gauss_a}.  Using \cite{turaev2}, we have the following.  
\end{remark}
\begin{lemma}\label{reide_negative}
A long virtual knot is generated by Fig. \ref{rel_gauss_a}.  A virtual knot is generated by Fig. \ref{rel_gauss_a} neglecting the base points.  
\end{lemma}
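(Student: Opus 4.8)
The plan is to translate the statement into Gauss diagrams via Theorem \ref{gpv_thm} and then cut the resulting list of moves down to the one displayed in Fig. \ref{rel_gauss_a}. By Theorem \ref{gpv_thm}, the map sending a long virtual knot diagram on the plane to its Gauss diagram with base point descends to a bijection between long virtual knot diagrams modulo virtual moves and Gauss diagrams with base point, and under this bijection a Reidemeister move performed in the complement of the base point becomes a local modification of the Gauss diagram. Hence the set of long virtual knots is the quotient of the set of Gauss diagrams with base point by the local modifications that are the images of $\Omega_1 a$, $\Omega_1 b$, $\Omega_2$, and $\Omega_3$ carried out away from the base point, and it suffices to show that this quotient coincides with the quotient by the moves of Fig. \ref{rel_gauss_a}. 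That every move of Fig. \ref{rel_gauss_a} is the image of a Reidemeister move away from the base point is immediate from the sign and arrow conventions; the content lies in the converse. For plain virtual knots the same discussion applies after discarding the base points, so I would treat only the long case.

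Next I would enumerate the Gauss-diagram images of the Reidemeister moves. A move $\Omega_1 a$ or $\Omega_1 b$ adds or removes a chord whose two endpoints cobound an arc of the circle disjoint from all other endpoints; ranging over the two variants this realizes both signs $\epsilon=\pm$ with the arrow fixed by the writhe convention, which is the first row of Fig. \ref{rel_gauss_a}. A move $\Omega_2$ adds or removes a pair of chords with opposite signs positioned as in the second row of Fig. \ref{rel_gauss_a}. A move $\Omega_3$ exchanges the two triangle configurations of three chords; because the six endpoints must appear on the circle in the cyclic order imposed by a planar triangle, the only data to record are the three signs and the three arrow directions, and a direct inspection of the sign/arrow patterns realizable by a planar $\Omega_3$ — using the symmetry that reversing the orientation of the ambient strand interchanges $\alpha$ and $\beta$ — leaves exactly the twelve cases listed in Table \ref{table_direction}.

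Then I would invoke the reduction to three moves. Using the $\Omega_2$ relations (second row of Fig. \ref{rel_gauss_a}) to slide a chord endpoint past a parallel pair, and the $\Omega_1$ relations to create or cancel isolated chords, one checks — following Turaev \cite{turaev2} — that the twelve cases of Table \ref{table_direction} collapse onto the three groups separated there by the double lines, and that within each group the representative drawn in the third row of Fig. \ref{rel_gauss_a}, with $(\epsilon,\eta,\zeta)$ restricted as in its caption, generates the remaining cases modulo $\Omega_1$ and $\Omega_2$. Combining this with the first two rows shows that the moves of Fig. \ref{rel_gauss_a} generate long virtual knot equivalence on Gauss diagrams with base point, and deleting the base points yields the statement for virtual knots.

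The step I expect to be the main obstacle is the last one. Beyond the somewhat intricate sign-and-arrow bookkeeping of the $\Omega_3$ reduction, in the long case one must verify that every intermediate move in that reduction can be supported in the complement of the base point, i.e., that no chord endpoint is ever forced to slide across it. This holds because the base point obstructs only the sliding of an endpoint along the arc it occupies, whereas each reduction above is local in the chord diagram and can be performed after rotating the relevant configuration off that arc; making this base-point avoidance explicit at each stage is the delicate part, and it is precisely here that Turaev's analysis \cite{turaev2} is used.
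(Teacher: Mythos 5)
There is a genuine gap, and it sits exactly where the lemma has its content. You claim that the Gauss-diagram images of the oriented Reidemeister moves are precisely the moves drawn in Fig.~\ref{rel_gauss_a}: for $\Omega_3$ you argue that ``the six endpoints must appear on the circle in the cyclic order imposed by a planar triangle,'' so that only signs and arrows remain to be recorded, and for $\Omega_2$ you assert that every instance produces the configuration of the second row. Neither claim is true. The cyclic order of the six (resp.\ four) chord endpoints relative to the base point is not determined by planarity of the local picture; it depends on the order in which the based curve visits the three (resp.\ two) branches and on the directions of traversal, so the oriented $\Omega_3$ moves produce many endpoint configurations besides the one drawn in the third row, and the parallel-orientation $\Omega_2$ moves produce a configuration absent from the second row. (The paper itself uses this: in the proof of Theorem~\ref{main_s0} it invokes Lemma~\ref{reide_negative} to say that for $\Omega_2$ ``it is sufficient to consider only the cases of $D^{ud}$ and $D^{du}$,'' i.e.\ the $uu$ and $dd$ variants are deliberately missing from Fig.~\ref{rel_gauss_a} and must be \emph{derived}.) Consequently the heart of the lemma is to show that the undepicted oriented variants are generated by the depicted ones, in the based setting; your argument never does this. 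Your third step even runs in the opposite direction, collapsing the twelve listed cases of Table~\ref{table_direction} into three groups, which is not what is needed, and your base-point discussion relies on ``rotating the relevant configuration off the arc'' containing the base point, an operation that is not available for long knots --- moving a configuration past the base point is precisely the difference between the based and closed theories, so base-point avoidance must be checked inside the derivations themselves (as in \cite{turaev2}, or in the spirit of \cite{polyak_mini}), not arranged by rotation.

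For comparison: the paper offers no argument at all for this lemma; it is stated as a consequence of Turaev's work, with the preceding remark only explaining why a single endpoint configuration is drawn for $\Omega_3$. So your instinct to outsource the hard combinatorics to \cite{turaev2} matches the paper, but the scaffolding you build around that citation asserts a false enumeration (that Fig.~\ref{rel_gauss_a} already exhausts the images of all oriented moves) and therefore hides, rather than proves, the statement's actual content.
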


\section{Versions of the Jones polynomial}\label{ver_jones}
In this section, the Gauss diagrams are oriented Gauss diagrams and have relations corresponding to Reidemeister moves.  The symbol $\epsilon$ stands for $+$ or $-$ as in Fig. \ref{rel_gauss_a}.  

First, let us consider Gauss diagrams neglecting the directions of arrows on chords.  Then, the map $p_r$ is defined by correspondences of codes: 
\begin{picture}(0,20)
\put(10,7){\circle{20}}
\put(27,5){$\mapsto$}
\put(55,7){\circle{20}}
\put(3,0){\vector(1,1){14}}
\put(2,4.5){$\epsilon$}
\put(48,0){\line(1,1){14}}
\put(46,4){$\epsilon$}
\put(65,0){.}
\end{picture}

The projection $p_{r}$ induces relations on the set of Gauss diagrams neglecting the directions of arrows.  This topology is determined by Fig. \ref{rel_gauss_a} except for neglecting the directions of arrows.  The topological objects are called pseudolinks (resp. long pseudolinks) for virtual knots (resp. long virtual knots).  

Turaev obtained the following fact \cite[Section 8.3]{turaev2} through his nanoword theory: 
\begin{theorem}[Turaev]\label{main}
The Jones polynomial $V_{K}$ of an oriented knot $K$ is defined by $p_r(G)$, where $G$ is a Gauss diagram of $K$; i.e., $V_{K}$ $=$ $V_{p_r(G)}$.  
\end{theorem}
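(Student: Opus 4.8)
The plan is to prove something slightly stronger and entirely combinatorial: when read off from a Gauss diagram $G$, both the Kauffman bracket polynomial and the writhe of a virtual knot depend only on the cyclic positions of the chord endpoints and on the signs of the chords, and never on the arrows. Since $V_{K}$ is obtained from the bracket by the normalisation $(-A)^{-3w}\langle\,\cdot\,\rangle$ followed by the substitution $A=t^{-1/4}$, this exhibits $V_{K}$ as a function of $p_r(G)$ alone, which is exactly the assertion $V_{K}=V_{p_r(G)}$; the right-hand side is then simply \emph{defined} as the common value of $V$ over all arrow-decorations of $p_r(G)$.

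First I would recall the state-sum description of the bracket of a virtual diagram $D$: a state $S$ assigns to each real crossing one of its two smoothings, and $\langle D\rangle=\sum_{S}A^{\#A(S)-\#B(S)}(-A^{2}-A^{-2})^{|S|-1}$, where $|S|$ is the number of loops of $S$ (virtual crossings contributing nothing) and $\#A(S)$, $\#B(S)$ count the crossings resolved by their $A$- and $B$-smoothings; also $w(D)=\sum_{c}\epsilon(c)$. Transcribing this onto $G$: at a chord $c$ the ``oriented'' smoothing reconnects the incoming branch to the outgoing branch, which combinatorially splits the underlying circle into the two arcs cut out by the endpoints of $c$, and the ``disoriented'' smoothing is the other one; the loop count of the resulting $1$-manifold is a combinatorial function of the endpoint positions and of the chosen smoothings alone. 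Hence $|S|$ for every state, and $w=\sum_c\epsilon(c)$, already depend only on the data retained by $p_r$.

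It remains to pin down which smoothing of $c$ is the one counted toward $\#A(S)$. For this I would invoke the standard identities underlying the bracket-to-Jones passage, $\langle L_{+}\rangle=A\langle L_{0}\rangle+A^{-1}\langle L_{\infty}\rangle$ and $\langle L_{-}\rangle=A^{-1}\langle L_{0}\rangle+A\langle L_{\infty}\rangle$ with $L_{0}$ the oriented smoothing: at a chord of sign $+$ the $A$-smoothing is the oriented one, and at a chord of sign $-$ it is the disoriented one. Therefore, given the positions (which fix the oriented/disoriented labelling combinatorially) and the sign $\epsilon(c)$, it is determined which smoothing enters $\#A(S)$, with no reference to the arrow on $c$. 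So every term of the state sum, and the writhe, are functions of $p_r(G)$ only; two Gauss diagrams with the same $p_r$-image have equal bracket and equal writhe, hence equal Jones polynomial, and $V_{K}=V_{p_r(G)}$ follows.

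I expect the only real obstacle to be a careful convention check, not a genuine difficulty: one must verify that in any realisation of $G$ the Kauffman $A$-smoothing at $c$ (obtained by rotating the over-branch counterclockwise) coincides with the combinatorially defined oriented, resp. disoriented, smoothing precisely according to whether $\epsilon(c)$ is $+$ or $-$, so that the arrow cannot slip back in through the definition of the bracket itself. In particular one has to disarm the tempting but false slogan that ``reversing an arrow interchanges the $A$- and $B$-smoothings'': reversing an arrow while holding the sign fixed forces a compensating change of the local planar configuration at $c$, and the two effects cancel at the level of the combinatorial state sum. The nanoword argument cited from \cite{turaev2} records the same cancellation; the state-sum route above is an equivalent and self-contained alternative.
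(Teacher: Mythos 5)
The paper contains no proof of Theorem \ref{main} to compare against: it is imported from Turaev \cite[Section 8.3]{turaev2}, where it is obtained through the nanoword/pseudolink machinery. Your state-sum argument is a correct, self-contained alternative, and it isolates exactly the two facts needed: (i) the loop count $|S|$ of every Kauffman state is computed by cutting the underlying circle at the chord endpoints and regluing according to the chosen smoothings, hence is a function of the endpoint positions and the smoothing choices only, independent of arrows, signs, or any planar realisation; (ii) which of the two smoothings at a crossing is the $A$-smoothing is determined by the sign alone (for a positive crossing the $A$-smoothing is the oriented one), a purely local statement invariant under rotation of the plane and therefore independent of which branch is the overpass. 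Together with $w(D)=\sum_c\epsilon(c)$ this shows that the bracket and the writhe, hence $V_K=(-A)^{-3w}\langle D\rangle|_{A=t^{-1/4}}$, see only the data retained by $p_r$, which is the stated equality once $V_{p_r(G)}$ is read as the common value over arrow-decorations. Your closing convention check is the right one, and your observation that reversing an arrow at fixed sign forces a compensating change of the local planar configuration is precisely the virtualization move of Fig. \ref{virtulization_move}; your proof is in effect the bracket-level analogue of Manturov's theorem quoted in Sec. \ref{app_kh}. One small supplement is worth recording if you want $V$ to be an invariant of pseudolinks as equivalence classes (which is how the theorem is used later, e.g. for $V_{i(p(K))}$ in Theorem \ref{main_s0}): given a pseudolink move $P_1\to P_2$, choose an arrow-decoration of $P_1$ matching the arrowed template of Fig. \ref{rel_gauss_a} locally --- legitimate because your argument shows the value does not depend on the decoration --- and apply the corresponding Reidemeister move; combined with the standard Reidemeister invariance of $V$ for virtual knots this gives the descent, so the gap is only one of exposition, not of substance.
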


Second, we consider the map $p$ from Gauss diagrams with base points to Gauss diagrams neglecting signs of arrows on chords as follows: 

\begin{picture}(0,20)
\put(2,13){\circle*{3}}
\put(10,7){\circle{20}}
\put(13,6.5){\tiny$-$}
\put(17,14){\vector(-1,-1){14}}
\put(20,0){,}
\end{picture}
\qquad
\begin{picture}(0,20)
\put(2,13){\circle*{3}}
\put(10,7){\circle{20}}
\put(27,5){$\mapsto$}
\put(47,13){\circle*{3}}
\put(55,7){\circle{20}}
\put(3,0){\vector(1,1){14}}
\put(1,4.5){\tiny$+$}
\put(46.5,4.5){\small$a$}
\put(48,0){\line(1,1){14}}
\put(65,0){,}
\put(75,0){\text{and}}
\end{picture}
\qquad
\begin{picture}(0,20)
\put(79,13){\circle*{3}}
\put(87,7){\circle{20}}
\put(78,4.5){\tiny$-$}
\put(80,0){\vector(1,1){14}}
\put(97,0){,}
\end{picture}
\begin{picture}(0,20)
\put(102,13){\circle*{3}}
\put(110,7){\circle{20}}
\put(127,5){$\mapsto$}
\put(147,13){\circle*{3}}
\put(155,7){\circle{20}}
\put(113,6.5){\tiny$+$}
\put(117,14){\vector(-1,-1){14}}
\put(148,4.5){\small$b$}
\put(162,14){\line(-1,-1){14}}
\put(165,0){.}
\end{picture}

The projection $p$ means the underlying curves, called open flat virtual knots, for long virtual knots.  This topology is determined by the relations of the Gauss diagrams with base points in Fig. \ref{rel_gauss_a} where Table \ref{table_direction} is restricted to Cases 1 and 3, except for replacing $+$ (resp. $-$) with $a$ (resp. $b$) and neglecting the directions of arrows.  

Third, we consider the map $i$ between Gauss diagrams as follows: 

\begin{picture}(0,20)
\put(2,13){\circle*{3}}
\put(10,7){\circle{20}}
\put(27,5){$\mapsto$}
\put(47,13){\circle*{3}}
\put(55,7){\circle{20}}
\put(3,0){\line(1,1){14}}
\put(1,4.5){\small$a$}
\put(46.5,4.5){\tiny$+$}
\put(48,0){\line(1,1){14}}
\put(65,0){,}
\put(102,13){\circle*{3}}
\put(75,0){\text{and}}
\put(110,7){\circle{20}}
\put(127,5){$\mapsto$}
\put(147,13){\circle*{3}}
\put(155,7){\circle{20}}
\put(103,4.5){\small$b$}
\put(117,14){\line(-1,-1){14}}
\put(145.5,4.5){\tiny$-$}
\put(162,14){\line(-1,-1){14}}
\put(165,0){.}
\end{picture}

\begin{theorem}\label{main_s0}
Let $D$ be a diagram of an arbitrary long virtual knot $K$.  The map $V_{i(p(D))}$ is an invariant of the long virtual knot $K$.  
\end{theorem}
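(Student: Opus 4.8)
The plan is to reduce the assertion to a finite check on generating moves and then to invoke Theorem~\ref{main}. By Lemma~\ref{reide_negative}, two Gauss diagrams with base points represent the same long virtual knot if and only if they are related by a finite sequence of the moves of Fig.~\ref{rel_gauss_a} performed away from the base point, so it suffices to show that $V_{i(p(D))}=V_{i(p(D'))}$ whenever $D$ and $D'$ differ by one such move. Now $i(p(D))$ and $i(p(D'))$ are Gauss diagrams carrying signs on their chords but no arrow directions, i.e.\ pseudolink diagrams, so $V$ is defined on them by Theorem~\ref{main}; that theorem also tells us that $V$ is unchanged under the moves of Fig.~\ref{rel_gauss_a} with the directions of arrows neglected, that is, under the pseudolink moves. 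Since $V$ does not depend on the location of the base point, we may discard the base point and use the pseudolink moves without requiring that they avoid it. Hence everything comes down to checking, one generating move at a time, that $i(p(D))$ and $i(p(D'))$ are related by a finite sequence of pseudolink moves.

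For $\Omega_1 a$ and $\Omega_1 b$ (the first two rows of Fig.~\ref{rel_gauss_a}) the two sides differ by one isolated chord with a sign and an arrow direction; $p$ replaces it by an isolated $a$- or $b$-labelled chord and $i$ then by an isolated $+$- or $-$-signed chord, so $i(p(D))$ and $i(p(D'))$ differ by a single isolated signed chord and are related by the pseudolink $\Omega_1$ move. For the $\Omega_2$ moves (third and fourth rows) the two chords involved carry opposite signs $\epsilon$ and $-\epsilon$ with prescribed directions; pushing them through $p$ and then $i$ by the defining correspondences of those maps, one checks that the resulting two chords again carry opposite signs and sit in the configuration demanded by the pseudolink $\Omega_2$ move, so they cancel. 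In both cases $V$ is unchanged by Theorem~\ref{main}.

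The substantial case is $\Omega_3$, the third line of Fig.~\ref{rel_gauss_a} with the twelve sign/direction patterns of Table~\ref{table_direction}, which I would organise according to the three families of that table. In Cases~1--4 the three chords share a single sign and a uniform direction, hence a single label under $p$ and a single sign under $i$, so $i(p(D))$ and $i(p(D'))$ then differ precisely by a pseudolink $\Omega_3$ move. In the mixed-sign families, Cases~5--8 and~9--12, the key point is that the label assigned by $p$ is exactly the combination of sign and arrow direction prescribed by Table~\ref{table_direction}; applying $i$ therefore produces, on each side, three signed chords whose sign pattern is one of those already occurring in the $\Omega_3$ rows of Fig.~\ref{rel_gauss_a}, so the two sides are related by the corresponding pseudolink $\Omega_3$ move, if necessary after inserting and later removing a cancelling pair of chords via a pseudolink $\Omega_2$ move. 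Running through the twelve cases finishes the argument.

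I expect the $\Omega_3$ bookkeeping to be the main obstacle: one must confirm, in each of the twelve cases, that the re-signing produced by $i\circ p$ turns the long-virtual $\Omega_3$ move into a legitimate sequence of pseudolink moves, and this is exactly where the particular form of Table~\ref{table_direction} --- its grouping into three families --- is used. The reduction via Lemma~\ref{reide_negative} and Theorem~\ref{main}, together with the $\Omega_1$ and $\Omega_2$ checks, is routine by comparison.
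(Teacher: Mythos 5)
Your overall strategy is the paper's: reduce via Lemma~\ref{reide_negative} to the generating moves of Fig.~\ref{rel_gauss_a}, show that $i\circ p$ sends each such move to an allowed pseudolink move, and then invoke Theorem~\ref{main}. (The paper's second proof does exactly this, only diagrammatically: it introduces the resolution $q$ that makes the first-met branch the overstrand, proves $q\circ p$ is well defined on long virtual knots by a case check, and uses $p_r\circ q=i$.) The $\Omega_1$ and $\Omega_2$ checks you give are fine in substance. But the genuine content of the theorem is precisely the twelve-case $\Omega_3$ verification, and you do not carry it out --- you assert that ``one checks'' the resulting sign patterns are admissible and you yourself flag this bookkeeping as the main obstacle. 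That is the gap: without it the proof is a plan, not a proof.

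Moreover, the one concrete claim you do make about the $\Omega_3$ cases is wrong, which shows the deferred check is not as routine as you frame it. You argue that in Cases 1--4 of Table~\ref{table_direction} the three chords ``share a single sign and a uniform direction, hence a single label under $p$ and a single sign under $i$.'' But the label $a$ or $b$ assigned by $p$ is not a function of the pair (sign, $\alpha/\beta$-type) alone: it depends on the arrow direction relative to the order in which that chord's two endpoints are met from the base point, and this order is different for the three chords of the triangle. Indeed the paper's computation (via $q\circ p$ and Fig.~\ref{3rd_move_ver}) shows that Cases 2 and 3 are sent to the configuration of Case 8, whose sign pattern is $(-,-,+)$, not a uniform sign; only Cases 1 and 4 land on the all-$+$ pattern of Case 2. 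The final conclusion still holds because both $(+,+,+)$ and $(-,-,+)$ occur among the allowed $\Omega_3$ sign patterns of Fig.~\ref{rel_gauss_a}, but your reasoning does not establish this, and the fallback of ``inserting and removing a cancelling pair by a pseudolink $\Omega_2$ move'' is never made precise. What is needed is exactly the computation the paper performs: for each of the twelve cases, determine the image of both sides under $i\circ p$ (equivalently, under the first-branch-over resolution $q\circ p$) and observe that it is the Case~2 or Case~8 configuration, hence an admissible pseudolink $\Omega_3$ relation. (A minor point: $\Omega_1$, $\Omega_2$, $\Omega_3$ are the first, second, and third rows of Fig.~\ref{rel_gauss_a}, respectively; your row labels are shifted.)
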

\begin{proof}
The map $i$ sends open flat virtual knots to long pseudolinks.  The map is well defined, since the relations of open flat virtual knots corresponding to Fig. \ref{rel_gauss_a} are sent to the relation defined by the same Gauss diagrams with $a$ (resp. $b$) replacing $+$ (resp. $-$) while neglecting arrow directions.  By replacing $p_r(G)$ of Theorem \ref{main} with $i(p(K))$, another Jones polynomial $V_{i(p(K))}$ becomes an invariant of an arbitrary long virtual knot $K$.  
\end{proof}

\begin{remark}
Fukunaga regarded the map $i$ as the one producing a topological invariant \cite{fukunaga}.  
\end{remark}

Here, in order to capture the graphical meaning of the map $i \circ p$, we prove Theorem \ref{main_s0} in another way as below.  
\begin{proof}
Let $K$ be a long virtual knot and $D_K$ its diagram on a surface (cf. Sec. \ref{virtual_sec}).  We can consider the map $p$ to mean that every crossing of $D_K$ is replaced with a transversal double point.  Without loss of generality, we can assume by invoking plane isotopy that every crossing consists of two orthogonal branches.  Hence, we assume this condition in the rest of the proof.  Under this assumption, the definition of $p$ is represented as
\begin{equation}\label{p-eq}
\begin{split}
\begin{picture}(35,35)
\put(0,11){$p : $}
\put(20,0){\line(1,1){30}}
\put(20,30){\line(1,-1){10}}
\put(50,0){\line(-1,1){10}}
\put(60,11){$\mapsto$}
\put(80,0){\line(1,1){30}}
\put(80,30){\line(1,-1){30}}
\end{picture}
\qquad\qquad
\end{split}
\end{equation}
for a sufficiently small neighborhood of every crossing, where the exterior of the neighborhoods of the crossings is mapped to itself and contains the base point.  Then, by $p$, the curve $p(D_K)$ with the base point on a surface is determined to stable homeomorphisms preserving the base point and orientations of the curve and the surface.  Every transversal double point has exactly two tangent vectors $t_1$ and $t_2$, so there exist two types of crossings: one type has a positively oriented pair $(t_1, t_2)$ and the other has a negatively oriented pair $(t_1, t_2)$.  

More graphically, if the ambient surface containing the curve has counterclockwise orientation, every double point of $p(D_K)$ belongs to exactly one of two types: 
\begin{equation*}
\begin{picture}(50,50)
\put(0,5){$1$st}
\put(32,5){$2$nd}
\put(10,15){\vector(1,1){30}}
\put(40,15){\vector(-1,1){30}}
\put(60,5){,}
\put(104,5){$1$st}
\put(72,5){$2$nd}
\put(80,15){\vector(1,1){30}}
\put(110,15){\vector(-1,1){30}}
\end{picture}
\qquad\qquad
\end{equation*}
where $1$st (resp. $2$nd) means the first (resp. second) branch passing trough the double point starting from the base point.

Without loss of generality, we can assume that the ambient surface containing $D_K$ or $p(D_K)$ has counterclockwise orientation in the rest of the proof.  Under this assumption, for these two types of double points, we consider the map $q$ as follows: 

\begin{equation}\label{q-eq}
\begin{split}
\begin{picture}(130,50)
\put(0,0){$1$st}
\put(30,0){$2$nd}
\put(10,10){\vector(1,1){30}}
\put(40,10){\vector(-1,1){30}}
\put(60,20){$\mapsto$}
\put(110,10){\line(-1,1){10}}
\put(90,30){\vector(-1,1){10}}
\put(80,10){\vector(1,1){30}}
\put(120,10){,}
\end{picture}
\begin{picture}(100,50)
\put(0,0){$2$nd}
\put(33,0){$1$st}
\put(10,10){\vector(1,1){30}}
\put(40,10){\vector(-1,1){30}}
\put(60,20){$\mapsto$}
\put(110,10){\vector(-1,1){30}}
\put(100,30){\vector(1,1){10}}
\put(80,10){\line(1,1){10}}
\end{picture}
\end{split}
\end{equation}
for a sufficiently small neighborhood of every double point, where the exterior of the neighborhoods of the double points is mapped to itself and contains the base point.  The image $q \circ p(D_K)$ becomes a long virtual knot diagram.  

In what follows, we show that if $D_{K_1}$ and $D_{K_2}$ are stably equivalent, $q \circ p(D_{K_1})$ and $q \circ p(D_{K_2})$ are stably equivalent.  

According to the definition of $q \circ p$ by (\ref{p-eq}) and (\ref{q-eq}), if $D_{K_1}$ and $D_{K_2}$ are stably homeomorphic, preserving the base point and the orientations of the curve and the surface, so are $q \circ p(D_{K_1})$ and $q \circ p(D_{K_2})$.  Subsequently, we will verify that if $D_{K_1}$ and $D_{K_2}$ can be replaced by Reidemeister moves in the ambient surface away from the base point, so can $q \circ p(D_{K_1})$ and $q \circ p(D_{K_2})$.  

\begin{itemize}
\item Reidemeister moves $\Omega_1 a$ and $\Omega_1 b$.  

Let $D_1$ (resp. $D_2$) be the local diagram defined by the left (resp. right) side of the move $\Omega_1 a$ in Fig. \ref{reidemeister_m}, and let $D_3$ be the local diagram defined by the right side of the move $\Omega_1 b$ in Fig. \ref{reidemeister_m}.  For each of $D_1$, $D_2$, and $D_3$, there are two cases by choice of orientation.  If the orientation of $D$ $=$ $D_1$, $D_2$, or $D_3$ is along the direction from the bottom to the top (resp. from the top to the bottom), we denote the local diagram by $D^{u}$ (resp. $D^{d}$) where $u$ (resp. $d$) stands for up (resp. down).  Then, we have to check the following four pairs: $(D_3^{u}, D_2^{u})$ (Case 1), $(D_1^{u}, D_2^{u})$ (Case 2), $(D_1^{d}, D_2^{d})$ (Case 3), and $(D_3^{d}, D_2^{d})$ (Case 4).  Since each check is similar to the others, we first show the one for Case 2.  

According to the definition of $q \circ p$ by (\ref{p-eq}) and (\ref{q-eq}), $q \circ p(D_1^{u})$ $=$ $D_3^{u}$.  On the other hand, $q \circ p(D_2^{u})$ $=$ $D_2^{u}$.  Since $D_3^{u}$ and $D_2^{u}$ can be replaced by Reidemeister move $\Omega_1 b$, so can $q \circ p(D_1^{u})$ and $q \circ p(D_2^{u})$.  

Using the list below, we can show the other cases by analogy.  

Case 1: $q \circ p(D_3^{u})$ $=$ $D_3^{u}$.  

Case 2: $q \circ p(D_1^{u})$ $=$ $D_3^{u}$.  

Case 3: $q \circ p(D_1^{d})$ $=$ $D_1^{d}$.  

Case 4: $q \circ p(D_3^{d})$ $=$ $D_1^{d}$.  

\item Reidemeister move $\Omega_2$.  

Let $D_1$ (resp. $D_2$) be the local diagram defined by the left (resp. right) side of the move $\Omega_2$ in Fig. \ref{reidemeister_m}.  
For $D$ $=$ $D_1$ or $D_2$, let $D_r$ be the local diagram obtained by looking at $D$ upside down as shown in Fig. \ref{diagramref}.  
\begin{figure}[h!]
\begin{center}
\includegraphics[width=4cm]{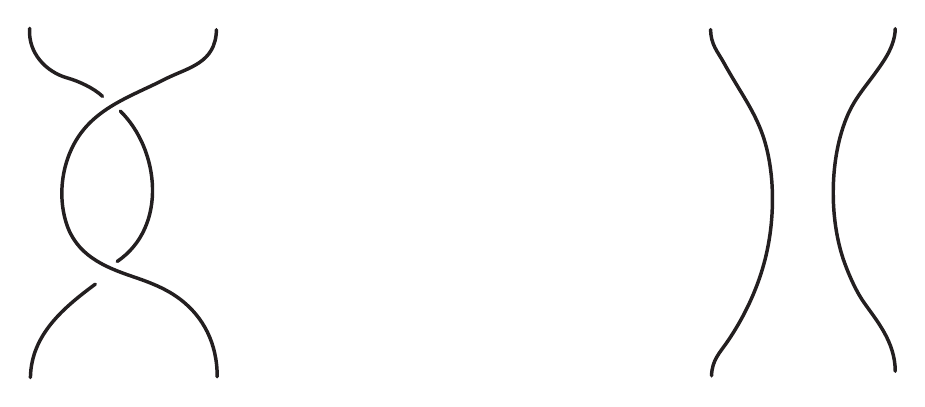}
\caption{The local diagrams $D_{1r}$ (left) and $D_{2r}$ (right).}\label{diagramref}
\end{center}
\end{figure}
By definition, $D_{2r}$ is the same as $D_{2}$.

For each of $D_1$, $D_2$, $D_{1r}$, and $D_{2r}$, there are four cases by choice of orientation.  If the orientations of the two branches of $D$ $=$ $D_1$ or $D_2$ are both in the direction from the bottom to the top (resp. from the top to the bottom), we denote the local diagram by $D^{uu}$ (resp. $D^{dd}$).  Similarly, $D^{ud}$ (resp. $D^{du}$) stands for the local diagram $D$ where the orientations of the two branches are upward (resp. downward) and downward (resp. upward) from the left.  Now, by Lemma \ref{reide_negative}, it is sufficient here to consider only the cases of $D^{ud}$ and $D^{du}$.  

The local diagram $D$ $=$ $D_1$, $D_2$, $D_{1r}$, or $D_{2r}$ consists of two branches.  The branch in which the endpoints are at the bottom left and the top left is called the left branch, and the other is called the right branch.  If the first branch of $D^{ud}$ is the right (resp. the left) when starting from the base point, we denote the local diagram by $D^{\overline{ud}}$ (resp. $D^{ud}$).  If the first branch of $D^{du}$ is the right (resp. the left), we denote the local diagram by $D^{\overline{du}}$ (resp. $D^{du}$).  There are some relations between the oriented $D$ and $D_{r}$ that can be observed by looking at these upside down.  For example, when we look at $D_{1r}^{\overline{ud}}$ upside down, we see $D_1^{ud}$.  We can recognize ``looking at it upside down'' as the operator $f_{\pi}$, and using this operator we have 
\begin{equation}\label{pi_formula}
\begin{split}
f_{\pi}(D_{1r}^{ud}) = D_{1}^{\overline{ud}}~{\rm{and}}~f_{\pi}(D_{2r}^{ud}) = D_{2}^{\overline{ud}},\\
f_{\pi}(D_{1r}^{du}) = D_{1}^{\overline{du}}~{\rm{and}}~f_{\pi}(D_{2r}^{du}) = D_{2}^{\overline{du}},\\
f_{\pi}(D_{1r}^{\overline{ud}}) = D_{1}^{ud}~{\rm{and}}~f_{\pi}(D_{2r}^{\overline{ud}}) = D_{2}^{ud},\\
f_{\pi}(D_{1r}^{\overline{du}}) = D_{1}^{du}~{\rm{and}}~f_{\pi}(D_{2r}^{\overline{du}}) = D_{2}^{du}.  
\end{split}
\end{equation}
In the eight formulae of (\ref{pi_formula}), $f_{\pi}$ behaves as the involution.  

The second row of Fig. \ref{rel_gauss_a} shows the eight moves between $D_{1}$ and $D_{2}$ or between $D_{1r}$ and $D_{2r}$ as follows ($\ast$ $=$ $1$ or $2$): $D_\ast^{ud}$ (Case 1), $D_\ast^{\overline{ud}}$ (Case 2), $D_\ast^{du}$ (Case 3), $D_\ast^{\overline{du}}$ (Case 4), $D_{\ast r}^{ud}$ (Case 5), $D_{\ast r}^{\overline{ud}}$ (Case 6), $D_{\ast r}^{du}$ (Case 7), and $D_{\ast r}^{\overline{du}}$ (Case 8).  We would like to show that the move between $q \circ p(D_1)$ and $q \circ p(D_2)$ is one of these eight cases.  However, if (\ref{pi_formula}) is used, it is sufficient to check only Cases 1 -- 4.  

Since each check is similar to the others, we first show the one for Case 2.  According to the definition of $q \circ p$ by (\ref{p-eq}) and (\ref{q-eq}), $q \circ p(D_1^{\overline{ud}})$ $=$ $D_{1r}^{\overline{ud}}$.  Likewise, $q \circ p(D_2^{\overline{ud}})$ $=$ $D_2^{\overline{ud}}$ $=$ $D_{2r}^{\overline{ud}}$.  Therefore, $q \circ p(D_1^{\overline{ud}})$ and $q \circ p(D_2^{\overline{ud}})$ can be replaced by the Reidemeister move of Case 6.  

Using the list below, we can show the other cases by analogy.  

Case 1: $q \circ p(D_1^{ud})$ $=$ $D_1^{ud}$.  

Case 2: $q \circ p(D_1^{\overline{ud}})$ $=$ $D_{1r}^{\overline{ud}}$.  

Case 3: $q \circ p(D_1^{du})$ $=$ $D_1^{du}$.  

Case 4: $q \circ p(D_{1}^{\overline{du}})$ $=$ $D_{1r}^{\overline{du}}$.  

\item Reidemeister moves similar to $\Omega_3$.  

Let us recall that an equivalence relation for a long virtual knot is defined by Lemma \ref{reide_negative} and Fig. \ref{rel_gauss_a}.  We have already verified the invariance of $V_{q \circ p(K)}$ under the moves in the first and second rows of Fig. \ref{rel_gauss_a}.  Consequently, it is sufficient to show the invariance of $V_{q \circ p(K)}$ under the moves in the third row of Fig. \ref{rel_gauss_a}.  

The moves in the third row of Fig. \ref{rel_gauss_a} are explained by Table \ref{table_direction}, which is realized as Fig. \ref{3rd_move_ver} by using the local knot diagrams.  
\begin{figure}
\qquad\qquad
\includegraphics[width=11cm]{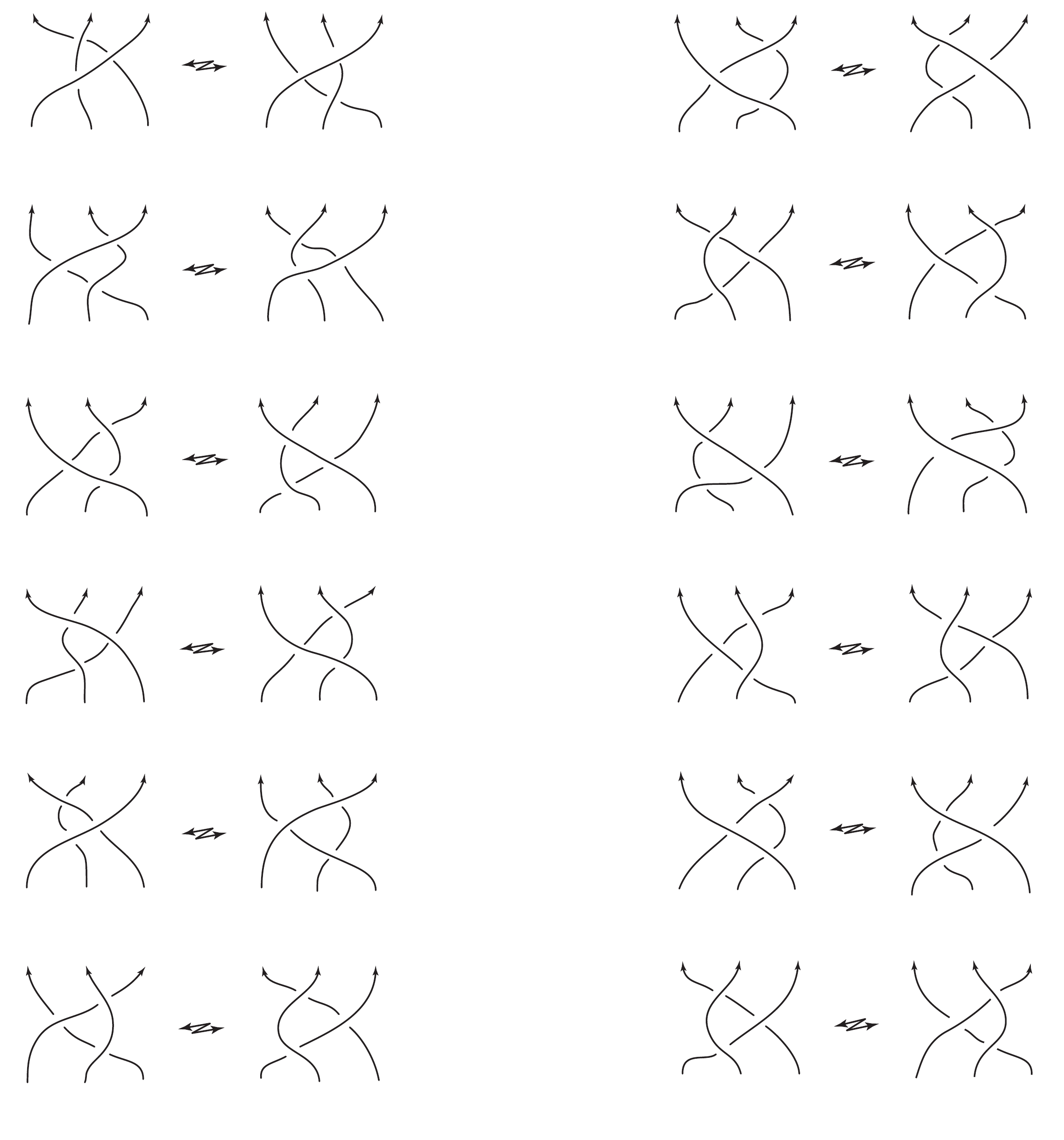}

\begin{picture}(0,0)
\put(-180,36){Case 6: }
\put(15,36){Case 12: }
\put(-180,100){Case 5: }
\put(15,100){Case 11: }
\put(-180,160){Case 4: }
\put(15,160){Case 10: }
\put(-180,215){Case 3: }
\put(15,215){Case 9: }
\put(-180,273){Case 2: }
\put(15,273){Case 8: }
\put(-180,330){Case 1: }
\put(15,330){Case 7: }
\put(-131,20){$3$\quad $2$ \quad$1$}
\put(-131,75){$1$\quad $2$ \quad$3$}
\put(-131,132){$1$\quad $2$ \quad$3$}
\put(-131,186){$3$\quad $2$ \quad $1$}
\put(-129,244){$3$\quad $2$ \quad $1$}
\put(-130,300){$1$\quad $2$ \quad $3$}
\put(-60,20){$3$\quad $2$ \quad$1$}
\put(-60,75){$1$\quad $2$ \quad$3$}
\put(-60,132){$1$\quad $2$ \quad$3$}
\put(-60,186){$3$\quad $2$ \quad $1$}
\put(-60,244){$3$\quad $2$ \quad $1$}
\put(-60,300){$1$\quad $2$ \quad $3$}
\put(63,20){$1$\quad $2$ \quad $3$}
\put(63,75){$3$\quad $2$ \quad $1$}
\put(63,132){$3$\quad $2$ \quad $1$}
\put(63,186){$1$\quad $2$ \quad $3$}
\put(63,244){$1$\quad $2$ \quad $3$}
\put(63,300){$3$\quad $2$ \quad $1$}
\put(132,20){$1$\quad $2$ \quad $3$}
\put(132,75){$3$\quad $2$ \quad $1$}
\put(132,132){$3$\quad $2$ \quad $1$}
\put(132,186){$1$\quad $2$ \quad $3$}
\put(132,244){$1$\quad $2$ \quad $3$}
\put(132,300){$3$\quad $2$ \quad $1$}
\end{picture}
\caption{Reidemeister moves that should be checked.  These cases correspond to Table \ref{table_direction}.  Numbers 1--3 indicate the order of branches, which is defined as the order for passing through the neighborhood when starting from the base point.}\label{3rd_move_ver}
\end{figure}

Let $D_{il}$ (resp. $D_{ir}$) be the local diagram defined by the left (resp. right) side of the move in Case $i$ of Fig. \ref{3rd_move_ver}.  According to the definition of $q \circ p$ by (\ref{p-eq}) and (\ref{q-eq}), if $i$ $=$ $1$, $4$, $5$, $8$, $9$, or $12$, we have $q \circ p(D_{il})$ $=$ $D_{2l}$ and $q \circ p(D_{ir})$ $=$ $D_{2r}$.  Similarly, if $i$ $=$ $2$, $3$, $6$, $7$, $10$, or $11$, we have $q \circ p(D_{il})$ $=$ $D_{8l}$ and $q \circ p(D_{ir})$ $=$ $D_{8r}$.

Here, we denote one of the Reidemeister moves between $D_{il}$ and $D_{ir}$ ($1 \le i \le 12$) by $\sim$, and we have 
\begin{equation}\label{3rd-move-eq}
\begin{split}
q \circ p(D_{il}) &= D_{2l} \sim D_{2r} = q \circ p(D_{ir}) \qquad (i = 1, 4, 5, 8, 9, 12), \\
q \circ p(D_{il}) &= D_{8l} \sim D_{8r} = q \circ p(D_{ir}) \qquad (i = 2, 3, 6, 7, 10, 11).  
\end{split}
\end{equation}
\end{itemize}
The formulae (\ref{3rd-move-eq}) complete the check that $q \circ p(D_{il})$ and $q \circ p(D_{ir})$ can be replaced by one of the Reidemeister moves between $D_{il}$ and $D_{ir}$ ($1 \le i \le 12$).  

As proved above, map $q \circ p$ is well defined as the map from the set of long virtual knots to itself.  

On the other hand, we can assume that the domain of the map $p_r$ is the set of long virtual knots.  Under this assumption, Theorem \ref{main} implies that $V_{K}$ $=$ $V_{{p_r}(K)}$.  Here, we notice that $p_r \circ q$ $=$ $i$.  Then, we have
\[V_{i \circ p(K)} = V_{p_r \circ q \circ p(K)} = V_{q \circ p(K)}.\]
Therefore, the map $V_{i \circ p(K)}$ is well defined as the map from the set of long virtual knots.  That is, the map $V_{i \circ p(K)}$ is an invariant for Reidemeister moves and virtual moves.  This completes the proof.
\end{proof}

In what follows, we show applications of Theorem \ref{main_s0}.  

\begin{ex}\label{ex_phrase}  
Let $K_1$ and $K_2$ be the long virtual knots shown in Fig. \ref{k1_k2}, with Jones polynomials $V_{K_1}(t)$ $=$ $V_{K_2}(t)$ $=$ $t^{-2}$ $+$ $t^{- 3/2}$ $-$ $t^{-1}$ $-$ $t^{- 1/2}$ $+$ $t^{1/2}$.  However, $V_{i(p(K_1))}(t)$ $=$ $V_{K_1}(t)$ $\neq$ $V_{K_1}(t^{-1})$ $=$ $V_{K_2}(t^{-1})$ $=$ $V_{i(p(K_2))}(t)$.  
\begin{figure}
\begin{center}
\includegraphics[width=12cm]{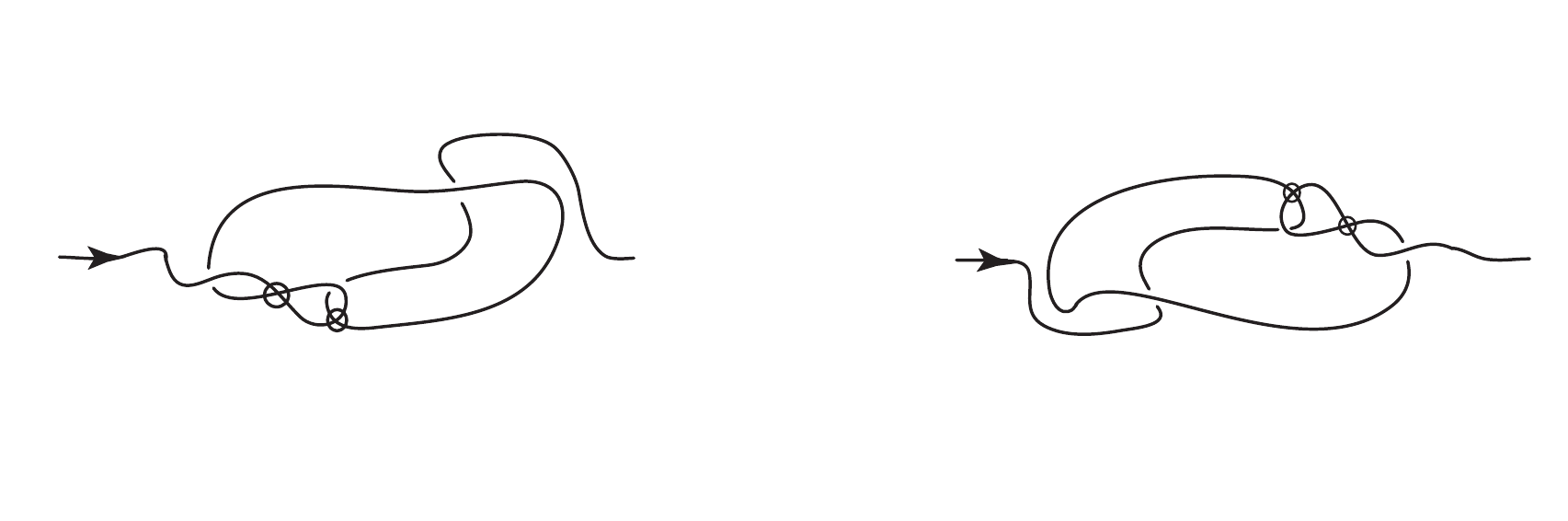}
\caption{Two long virtual knots $K_1$ (left) and $K_2$ (right).}\label{k1_k2}
\end{center}
\end{figure}
\end{ex}
Example \ref{ex_phrase} implies the following: 
\begin{theorem}\label{strongerthan}
Let $K$ be a long virtual knot $K$.  For $K$, the pair of $V_{K}$ and $V_{i(p(K))}$ is a stronger invariant than the polynomial $V_K$.  In other words, there exist two long virtual knots $K_1$ and $K_2$ such that $V_{K_1}$ $=$ $V_{K_2}$ but $V_{i(p(K_1))}$ $\neq$ $V_{i(p(K_2))}$.  
\end{theorem}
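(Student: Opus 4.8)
The plan is to deduce the statement directly from Theorem \ref{main_s0} and Example \ref{ex_phrase}; no new construction is required. First I would record the two trivial structural facts: the ordinary Jones polynomial $V_K$ is an invariant of long virtual knots (it depends only on the underlying virtual knot), and by Theorem \ref{main_s0} the quantity $V_{i(p(K))}$ is an invariant of the long virtual knot $K$ as well. Hence the ordered pair $(V_K, V_{i(p(K))})$ is a well-defined invariant of long virtual knots, and it is at least as strong as $V_K$ alone, since $V_K$ is recovered as its first coordinate. It therefore remains only to exhibit two long virtual knots on which the pair is strictly finer than $V_K$, i.e. two long virtual knots with equal Jones polynomials whose values of $V_{i(p(-))}$ differ.

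Such a pair is furnished by Example \ref{ex_phrase}. For the long virtual knots $K_1$ and $K_2$ of Fig. \ref{k1_k2} one has $V_{K_1} = V_{K_2} = t^{-2} + t^{-3/2} - t^{-1} - t^{-1/2} + t^{1/2}$, so $V_K$ does not separate them. The point I would make explicit is that this polynomial is not fixed by the substitution $t \mapsto t^{-1}$: indeed $V_{K_1}(t^{-1}) = t^{2} + t^{3/2} - t - t^{1/2} + t^{-1/2}$, which already differs from $V_{K_1}(t)$ in, say, the coefficient of $t^{-2}$. Combining this asymmetry with the computation carried out in Example \ref{ex_phrase}, namely $V_{i(p(K_1))}(t) = V_{K_1}(t)$ while $V_{i(p(K_2))}(t) = V_{K_2}(t^{-1}) = V_{K_1}(t^{-1})$, yields $V_{i(p(K_1))} \neq V_{i(p(K_2))}$, which is exactly the inequality demanded by the second formulation of the theorem.

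Putting these together proves both formulations simultaneously: the pair $(V_K, V_{i(p(K))})$ is a stronger invariant than $V_K$ precisely because $V_{K_1} = V_{K_2}$ yet $V_{i(p(K_1))} \neq V_{i(p(K_2))}$. The main obstacle does not lie in this deduction, which is essentially formal, but in the content already packaged into Example \ref{ex_phrase}, i.e. the explicit evaluation of $V_{i(p(K_1))}$ and $V_{i(p(K_2))}$ from the diagrams of Fig. \ref{k1_k2} by tracking them through the maps $p$ and $i$ (equivalently $q$ and $p_r$, as in the second proof of Theorem \ref{main_s0}). Accordingly, in writing up the theorem I would keep the argument short and cite Example \ref{ex_phrase} for the substantive verification, taking care only to state the failure of palindromicity $V_{K_1}(t) \neq V_{K_1}(t^{-1})$ explicitly, so that the reader sees at once why the two values of $V_{i(p(-))}$ are genuinely distinct.
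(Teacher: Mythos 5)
Your proposal is correct and takes essentially the same route as the paper, whose entire proof of Theorem \ref{strongerthan} is to cite Example \ref{ex_phrase}. Your only addition is to spell out explicitly that $V_{K_1}(t)$ is not palindromic under $t \mapsto t^{-1}$, which is a reasonable clarification of why $V_{i(p(K_1))} \neq V_{i(p(K_2))}$ but does not change the argument.
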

\begin{proof}
The above example demonstrates the statement.  
\end{proof}
Example \ref{ex_phrase} also means that $V_{K}$ detects the orientation of the long virtual knot for $K_1$.  Let $-K$ be a knot with an orientation that is the reverse of that for a knot $K$.  
\begin{remark}\label{ori}
Let $K$ be a long virtual knot that has a Gauss diagram which satisfies the condition that when arrow directions are neglected, the Gauss diagram is symmetric with respect to a line passing thorough the base point (e.g. the right figure of Fig. \ref{virtual_gauss}).  If a knot $K$ satisfies the assumption ($\diamond$) that the Jones polynomial $V_{i(p(K))}$ changes by replacing $t^{1/2}$ $\mapsto$ $t^{- 1/2}$, then the polynomial $V_{i(p(K))}$ of $K$ detects the orientation of $K$ (e.g., $K$ $=$ $K_1$).  This is because of the well-known fact that the Jones polynomial $V_{\overline{K}}$ of the mirror image $\overline{K}$ is obtained by replacing $t^{1/2}$ with $t^{- 1/2}$.  In other words, by the assumption ($\diamond$), $V_{i(p(K))}$ $\neq$ $V_{i(p(\overline{K}))}$ $=$ $V_{i(p(-K))}$.  However, there is no example satisfying the assumption ($\diamond$) for classical long knots since an arbitrary open flat virtual knot on the plane is equal to the trivial open flat virtual knot under its relations.  Here, the consideration is summarized by the following proposition: 
\end{remark}
\begin{proposition}\label{ori_prop}
Let $K$ be an arbitrary nonclassical long virtual knot and $\overline{K}$ its mirror image.  If the Jones polynomial $V_{K}(t)$ is not symmetric with respect to $t^{1/2} \mapsto t^{-1/2}$, the Jones polynomial $V_{i(p(K))}(t)$ detects its orientation.  
\end{proposition}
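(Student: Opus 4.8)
The plan is to pin down how $V_{i(p(K))}$ transforms under reversing the orientation of $K$, and then to feed in the hypothesis on $V_K$ together with the classical behaviour of the Jones polynomial under mirror images. Throughout, $-K$ denotes $K$ with reversed orientation, so that ``$V_{i(p(K))}$ detects the orientation of $K$'' means $V_{i(p(K))}\neq V_{i(p(-K))}$.

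First I would establish that $V_{i(p(-K))}(t)=V_{i(p(K))}(t^{-1})$. Reversing the orientation of $K$ interchanges, at every double point of a diagram, which branch is traversed first starting from the base point; carrying this through the definition of $p$ (a chord is labelled $a$ or $b$ by its sign together with this first/second datum) and of $i$ (which sends $a\mapsto +$ and $b\mapsto -$), every chord label is exchanged $a\leftrightarrow b$. In terms of the diagrammatic map $q$ of the second proof of Theorem \ref{main_s0}, for which $V_{i(p(K))}=V_{q\circ p(K)}$, this says precisely that the long virtual knot $q\circ p(-K)$ is the mirror image of $q\circ p(K)$ (up to an orientation reversal, which does not affect the Jones polynomial); since the Jones polynomial of a mirror image is obtained by the substitution $t^{1/2}\mapsto t^{-1/2}$, the asserted identity follows. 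One may equivalently route this through $\overline{K}$, as in Remark \ref{ori}, using $V_{i(p(-K))}=V_{i(p(\overline{K}))}$ and $V_{i(p(\overline{K}))}(t)=V_{i(p(K))}(t^{-1})$. In any case we conclude: $V_{i(p(K))}$ detects the orientation of $K$ if and only if $V_{i(p(K))}(t)$ is not invariant under $t^{1/2}\mapsto t^{-1/2}$.

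It then remains to deduce, from the hypothesis that $V_K(t)$ is not symmetric under $t^{1/2}\mapsto t^{-1/2}$, that $V_{i(p(K))}(t)$ is not symmetric either. For this I would compare the two polynomials directly. The map $q\circ p$ rebuilds a diagram $D_K$ of $K$ by replacing each crossing with one whose over/under datum is dictated by the planar shadow and the base point rather than by the original crossing, and I would analyse the effect of this purely local replacement on the Kauffman bracket, aiming to show that for a nonclassical $K$ the Jones polynomial $V_{q\circ p(K)}$ agrees with $V_K$ up to the involution $t^{1/2}\mapsto t^{-1/2}$; this is consistent with Example \ref{ex_phrase}, where in fact $V_{i(p(K_1))}(t)=V_{K_1}(t)$ exactly. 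Granting this, asymmetry of $V_K$ forces asymmetry of $V_{i(p(K))}$, and the first step then finishes the proof. The nonclassicality hypothesis is essential here, not merely a convenience: for a classical long knot the underlying open flat virtual knot is trivial (Remark \ref{ori}), hence $q\circ p(K)$ is the unknot and $V_{i(p(K))}=1$ is symmetric even when $V_K$ is not --- for instance for any chiral classical long knot --- so the statement genuinely fails outside the nonclassical range, and any correct proof must use nonclassicality in an essential way.

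The main obstacle is this last step: controlling the Kauffman bracket under the replacement $q\circ p$ and, in the process, exploiting nonclassicality to rule out the ``classical-type collapse'' just described. I would carry out the local bracket computation by a case analysis over the two planar types of double point, paralleling the case analysis already performed in the second proof of Theorem \ref{main_s0}; the remaining bookkeeping --- the half-integer exponents of $t$ and the orientation conventions for the underlying circle and the ambient surface --- is routine.
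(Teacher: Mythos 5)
Your first step is exactly the paper's own argument. The paper's entire justification of this proposition is Remark \ref{ori}: reversing the orientation acts on $i\circ p$ the way the mirror does, so $V_{i(p(-K))}(t)=V_{i(p(\overline{K}))}(t)=V_{i(p(K))}(t^{-1})$, and hence $V_{i(p(K))}$ detects the orientation of $K$ precisely when it is not invariant under $t^{1/2}\mapsto t^{-1/2}$ (assumption $(\diamond)$ of the remark); nonclassicality enters only to explain that a classical long knot can never satisfy $(\diamond)$, since its open flat projection is trivial. Up to that point your proposal and the paper coincide.

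The genuine gap is your second step, which you need because you read the hypothesis literally as asymmetry of $V_{K}$ rather than of $V_{i(p(K))}$. The bridging claim you propose --- that for every nonclassical $K$ the polynomial $V_{i(p(K))}=V_{q\circ p(K)}$ agrees with $V_{K}$ up to the substitution $t^{1/2}\mapsto t^{-1/2}$ --- is not only left unproved in your sketch ("aiming to show"); it is false, so no local Kauffman-bracket analysis of the replacement $q\circ p$ can deliver it. The point is that $q\circ p(D)$ is determined by the open flat virtual knot $p(K)$ alone: switching the over/under datum at a real crossing flips the sign and reverses the arrow of the corresponding chord simultaneously, hence leaves its label $a$ or $b$, and therefore $i(p(K))$, unchanged; similarly, concatenating with a classical long knot does not change $i(p(K))$, because the classical factor has trivial flat projection. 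Thus arbitrarily many long virtual knots with pairwise different Jones polynomials share one and the same $V_{i(p(\cdot))}$, and they cannot all satisfy your comparison with $V_{K}$; nonclassicality does not rule out such families. The paper sidesteps this because Remark \ref{ori} places the asymmetry assumption on $V_{i(p(K))}$ itself; if the proposition's hypothesis is insisted upon as a condition on $V_{K}$, then the step you are missing is not supplied by the paper either, and it cannot be filled by the route you outline.
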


Next, we consider another type of example.  
\begin{ex}\label{ex2_phrase}
Let $K_3$ and $K_4$ be the long virtual knots shown in Fig. \ref{k3_k4}.  Then, $V_{K_3}$ $=$ $V_{K_4}$ but $V_{i(p(K_3))}(t)$ $=$ $V_{K_3}(t^{-1})$ $\neq$ $V_{K_3}(t)$ $=$ $V_{K_4}(t)$ $=$ $V_{i(p(K_4))}(t)$.  
\begin{figure}
\begin{center}
\includegraphics[width=10cm]{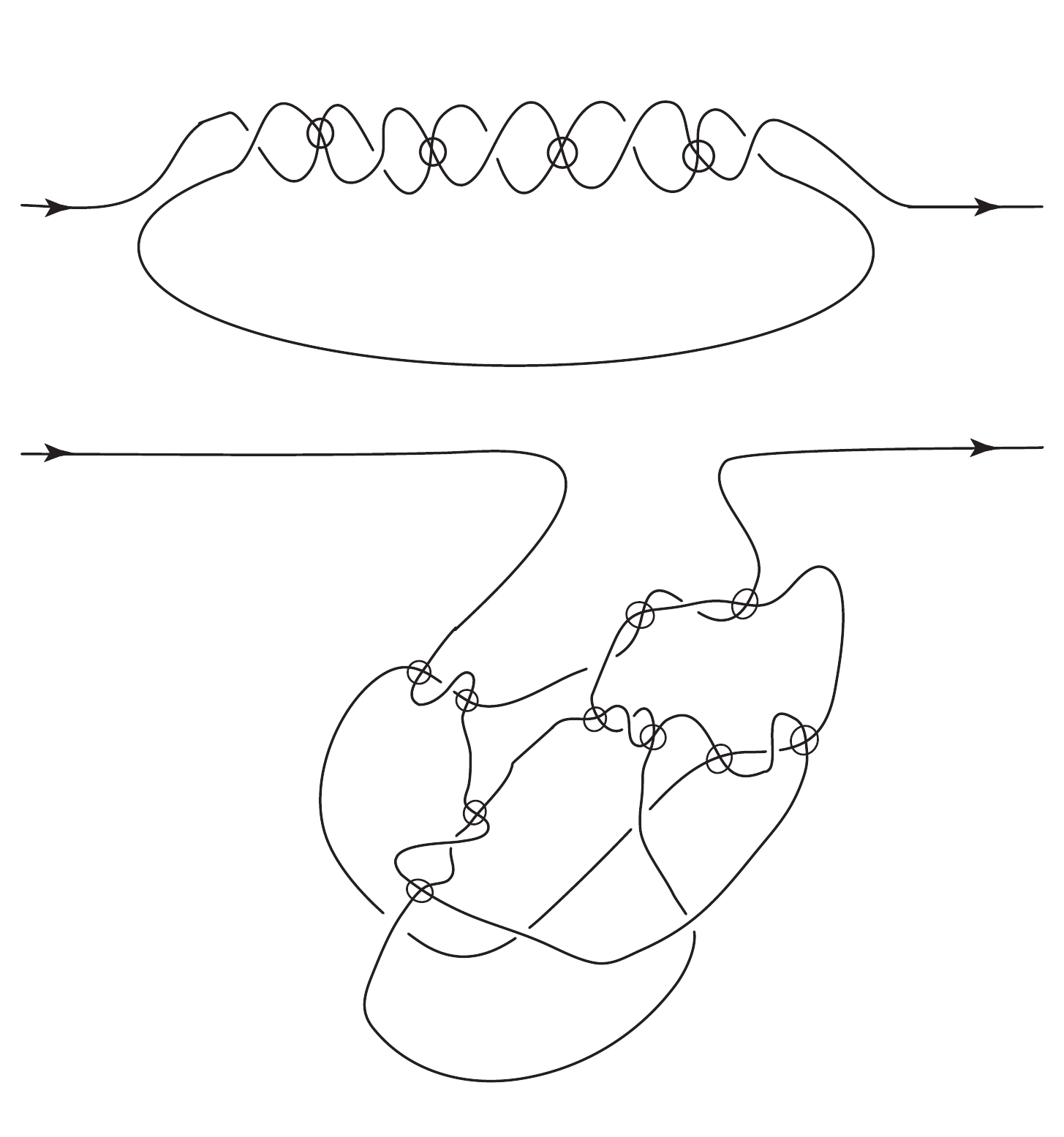}
\caption{Two long virtual knots $K_3$ (upper) and $K_4$ (lower).}\label{k3_k4}
\end{center}
\end{figure}
\end{ex}

Similarly, $p_a$ is defined as the composition $\tau_0 \circ p$ of the two maps $p$ and the involution map $\tau_0 :$ $a$ $\mapsto$ $b$ on chords of Gauss diagrams with base points.  Moreover, $p_{ra}$ is defined as the composition $\tau_1 \circ p_r$ of two maps $p_r$ and the involution $\tau_1 :$ $+$ $\mapsto$ $-$ on chords of Gauss diagrams (we consider Gauss diagrams with base points if necessary).  We can also consider the map $i_a$ defined as the composition $\tau_1 \circ i$.  It is easy to see that these maps imply well-defined maps between equivalence classes of Gauss diagrams determined by topological objects which we treated.  Then, we have the following.  

\begin{theorem}\label{four_thm}
All the choices of $p_r$, $p_{ra}$, $p$, $p_a$, $i$, and $i_a$, together generate four types of the Jones polynomials for long virtual knots.  

As a corollary to Theorem \ref{strongerthan}, the tuple of four versions of the Jones polynomial is stronger than the Jones polynomial for long virtual knots.  
\end{theorem}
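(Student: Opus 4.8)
The plan is to work entirely at the level of the three kinds of decorated Gauss diagrams appearing in the text — Gauss diagrams with base points (long virtual knots), Gauss diagrams with $a/b$-labels and no arrow directions (open flat virtual knots), and Gauss diagrams with signs and no arrow directions (long pseudolinks) — and to exploit Turaev's Theorem \ref{main}, which says the Jones polynomial descends to a function of the pseudolink $p_r(G)$. A composite of $p_r,p_{ra},p,p_a,i,i_a$ therefore yields a Jones polynomial of a long virtual knot $K$ exactly when it ends in a pseudolink. First I would record the (co)domains: $p_r,p_{ra}$ go from long virtual knots to pseudolinks, $p,p_a$ go from long virtual knots to open flat virtual knots, and $i,i_a$ (regarded as maps to pseudolinks, arrow directions being discarded) go from open flat virtual knots to pseudolinks. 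Since no map in the list has the category of long virtual knots as a codomain and none has the category of pseudolinks as a domain, the only composites landing in pseudolinks are the six maps
\[
p_r,\qquad p_{ra},\qquad i\circ p,\qquad i\circ p_a,\qquad i_a\circ p,\qquad i_a\circ p_a .
\]

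The next step is to collapse these six composites to four. I would use $p_{ra}=\tau_1\circ p_r$, $p_a=\tau_0\circ p$, $i_a=\tau_1\circ i$, with $\tau_0^2=\tau_1^2=\mathrm{id}$, together with the key identity $i\circ\tau_0=i_a$ on pseudolinks: because $i$ simply sends a label $a$ to a $+$-chord and $b$ to a $-$-chord, both $i\circ\tau_0$ and $\tau_1\circ i$ send $a\mapsto-$ and $b\mapsto+$. From $i\circ\tau_0=\tau_1\circ i$ it follows that
\[
i\circ p_a=i\circ\tau_0\circ p=i_a\circ p=\tau_1\circ i\circ p,\qquad
i_a\circ p_a=\tau_1\circ i\circ\tau_0\circ p=\tau_1\circ\tau_1\circ i\circ p=i\circ p ,
\]
so the six composites give only the four maps $p_r$, $p_{ra}=\tau_1\circ p_r$, $i\circ p$, $\tau_1\circ i\circ p=i_a\circ p$, hence the four invariants $V_{p_r(K)}$, $V_{p_{ra}(K)}$, $V_{i(p(K))}$, $V_{i_a(p(K))}$. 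Since applying $\tau_1$ to a pseudolink is taking its mirror image, i.e. the substitution $t^{1/2}\mapsto t^{-1/2}$ in the Jones polynomial (the fact already invoked in Remark \ref{ori}), these are $V_K(t)$, $V_K(t^{-1})$, $V_{i(p(K))}(t)$, $V_{i(p(K))}(t^{-1})$; using the non-palindromic polynomials and the pair $K_1,K_2$ in Example \ref{ex_phrase} I would check that no two of the four agree as invariants, which gives exactly four types.

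For the corollary the argument is immediate. The four-tuple $(V_K(t),V_K(t^{-1}),V_{i(p(K))}(t),V_{i(p(K))}(t^{-1}))$ has $V_{i(p(\cdot))}$ among its coordinates, so by Theorem \ref{strongerthan} (realized by Example \ref{ex_phrase}) there are long virtual knots $K_1,K_2$ with $V_{K_1}=V_{K_2}$ but $V_{i(p(K_1))}\neq V_{i(p(K_2))}$: the tuple separates them while $V$ does not. Hence the tuple of the four versions of the Jones polynomial is a strictly stronger invariant of long virtual knots than $V$.

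The step I expect to be the main obstacle is justifying $i\circ\tau_0=i_a$ at the level of pseudolinks — one must check that the coupled sign/arrow-direction conventions in the definitions of $p,p_a,i,i_a$ really do make $i\circ\tau_0$ and $\tau_1\circ i$ coincide once arrow directions are forgotten — and, relatedly, confirming that the list of six composites is exhaustive. Once those are in place, the rest is formal manipulation of the involutions $\tau_0,\tau_1$ together with the two cited facts (Turaev's theorem and the mirror-image behaviour of the Jones polynomial) and the computations in Examples \ref{ex_phrase} and, if desired, \ref{ex2_phrase}.
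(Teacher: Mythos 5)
Your proposal is correct and follows essentially the same route as the paper: the paper's entire proof consists of the two identities $i \circ p_a = i_a \circ p$ and $i_a \circ p_a = i \circ p$, which are exactly what you derive from the involution relations $p_a=\tau_0\circ p$, $i_a=\tau_1\circ i$, $i\circ\tau_0=\tau_1\circ i$, and the corollary is likewise settled by Theorem \ref{strongerthan} via Example \ref{ex_phrase}. Your extra bookkeeping (domains/codomains, distinctness of the four invariants, the mirror-image interpretation via $t^{1/2}\mapsto t^{-1/2}$) only makes explicit what the paper states tersely or in the discussion following the theorem.
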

\begin{proof}
Considering every combination of $p_r$, $p_{ra}$, $p$, $p_a$, $i$, and $i_a$ yields the formulas $i \circ p_a$ $=$ $i_a \circ p$ and $i_a \circ p_a$ $=$ $i \circ p$.  
\end{proof}
We consider these maps in Examples \ref{ex_pra} and \ref{ia}.  

Here, let us consider the graphical meaning of these variations in the $V_K$ of a long virtual knot $K$.  Recall the definition of $q \circ p$ by (\ref{p-eq}) and (\ref{q-eq}).  For the diagram $D$ of a long virtual knot, $q \circ p_a (D)$ $=$ $q \circ \tau_0 \circ p(D)$ is the mirror image of $q \circ p(D)$.  On the other hand, when we denote the mirror image of $D$ by $D^{*}$, we have $V_{p_{ra} (D)}$ $=$ $V_{D^{*}}$.  The pair of Jones polynomials $(V_{p_r (D)}, V_{p_{ra} (D)}, V_{i \circ p}(D), V_{i_a \circ p}(D))$ is nothing but $(V_{D}, V_{D^{*}}, V_{q \circ p(D)}, V_{\left(q \circ p(D)\right)^{*}})$; that is, we calculate the four values of the Jones polynomials of two diagrams of long virtual knots and their mirror images.

\section{Application of the discussion to Khovanov homology}\label{app_kh}
In this section, we apply the above discussion to Khovanov homology.  After we recall the Khovanov homology, we consider the above discussion for Khovanov homology in the case of the coefficient $\mathbb{Z}_2$.  
\subsection{Khovanov homology}
In this section, we recall the Khovanov homology of the Jones polynomial introduced by Khovanov \cite{khovanovjones}.  There are two major redefinitions of Khovanov homology (\cite{bar-natan}, \cite{viro}), and here we give a brief review of the definition in the style of Viro \cite{viro}.  

For a given knot diagram, let us consider a small edge, called a {\it{marker}}, for each crossing on the link diagram.  In the rest of this paper, we can use a simple notation such as that of Fig. \ref{markers} (c) for the marker of Fig. \ref{markers} (a).  Every marker has its sign defined as in Fig. \ref{markers}.  The signed markers determine the directions of smoothing for all crossings (Fig. \ref{smoothing}).  The smoothened link diagram is called the {\it{Kauffman state}} or simply the {\it{state}}.  
\begin{figure}
\begin{center}
\includegraphics[width=8cm]{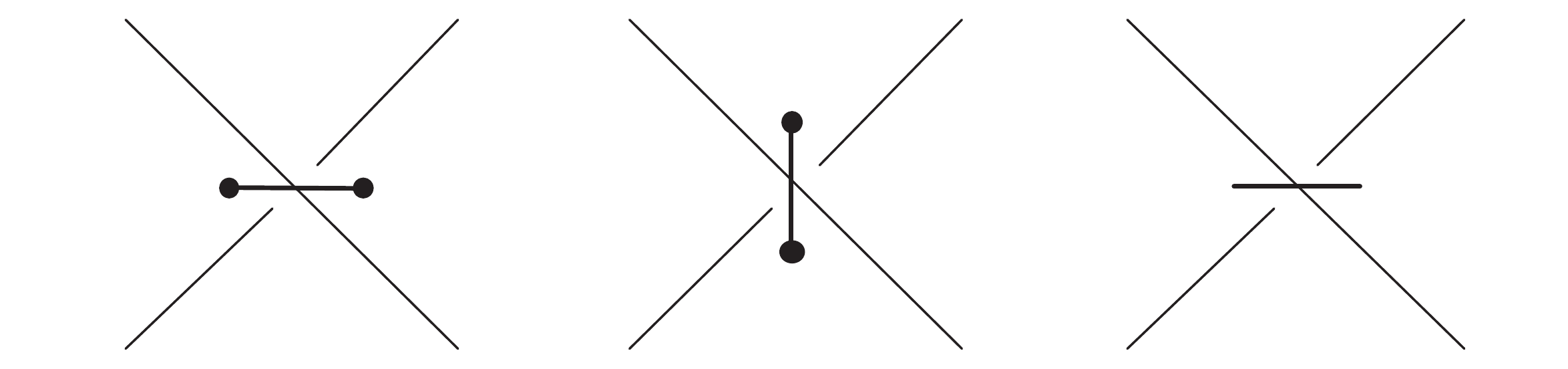}
\end{center}
\begin{picture}(90,0)
\put(-35,5){(a)}
\put(37,5){(b)}
\put(111,5){(c)}
\end{picture}
\caption{(a) Positive marker.  (b) Negative marker.  (c) Simple notation for positive marker.}\label{markers}
\end{figure}
\begin{figure}
\begin{center}
\begin{picture}(0,90)
\put(0,30){(b)}
\put(0,115){(a)}
\end{picture}\qquad\qquad\qquad
\includegraphics[width=7cm]{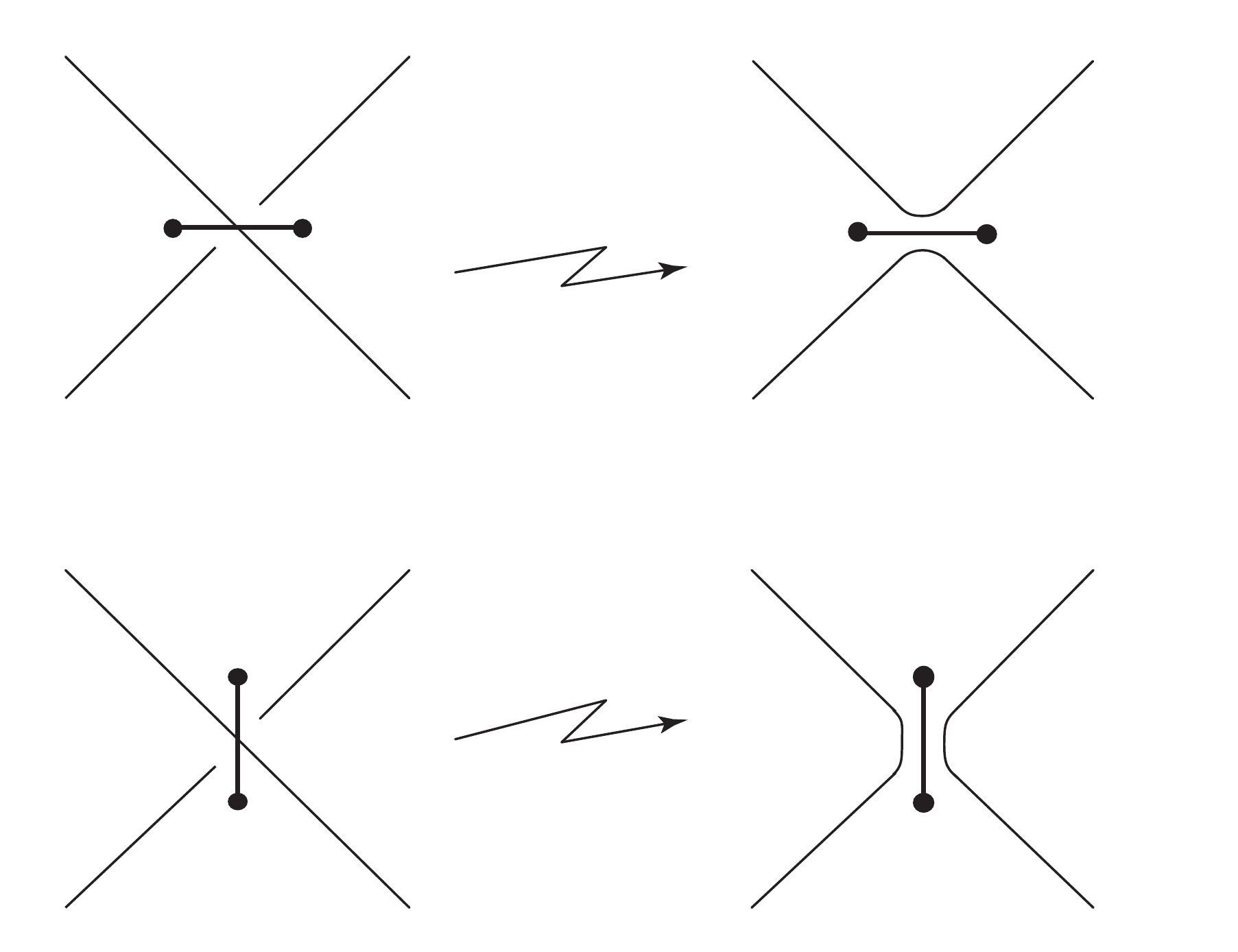}
\caption{Smoothing producing states.  The marker on the crossing in (a) is the positive marker, and that in (b) is the negative marker.}\label{smoothing}
\end{center}
\end{figure}
In the next step, we assign {\it{labels}} $x$ or $1$ for every circle of the state.  The {\it{degree}} of $y$ $=$ $x$ or $y$ $=$ $1$ is defined by ${\rm{deg}}(x)$ $=$ $-1$ or ${\rm{deg}}(1)$ $=$ $1$.  The state whose circles have labels $x$ or $1$ is called an {\it{enhanced state}} and is denoted by $S$.  Let $\sigma(S)$ be the number of positive markers minus the number of negative markers for an arbitrary enhanced state $S$.  For a label $y$ $=$ $x$ or $y$ $=$ $1$, we set $\tau(S)$ $=$ $\sum_{{\text{circles}}~y~{\text{in}}~S}$ ${\rm{deg}}(y)$.  For a link diagram $D$ of a link $L$, the unnormalized Jones polynomial $\hat{J}(L)$ of a link $L$ is obtained as
\begin{equation}
\hat{J}(L) = \sum_{{\text{enhanced states}}~S~{\text{of}}~D} (-1)^{i(S)} q^{j(S)}
\end{equation}
where $i(S)$ $=$ $(w(D) - \sigma(S))/2$ and $j(S)$ $=$ $w(D)$ + $i(S)$ + $\tau(S)$.  Here, $w(D)$ is the writhe number of $D$, which is defined as the number of positive crossings minus negative crossings.  The unnormalized Jones polynomial $\hat{J}(L)(q)$ is $(q + q^{-1}) V_{L}(q)$, with the variable $q$ replaced by $q$ $=$ $- t^{1/2}$ for the well-known (normalized) Jones polynomial $V_{L}(t)$.  Now, we define the Khovanov complex $C^{i, j}(D)$ as the abelian group generated by the enhanced Kauffman states $S$ of a fixed link diagram $D$ satisfying $i(S)$ $=$ $i$ and $j(S)$ $=$ $j$.  Let $T$ be an enhanced state obtained when a neighborhood of only one crossing with a positive marker is replaced by that of the negative marker, where the neighborhood in each of the cases is as listed in Fig. \ref{differential}.  
\begin{figure}
\qquad
\begin{picture}(0,0)
\put(-10,133){(a)}
\put(170,133){(d)}
\put(-10,80){(b)}
\put(170,80){(e)}
\put(-10,28){(c)}
\put(170,28){(f)}
\put(20,33){$1$}
\put(20,82){$1$}
\put(20,133){$x$}
\put(35,152){$S$}
\put(50,133){$1$}
\put(50,82){$x$}
\put(50,33){$1$}
\put(110,133){$x$}
\put(110,82){$x$}
\put(110,33){$1$}
\put(123,152){$T$}
\put(192,17){$1$}
\put(261,17){$1$}
\put(261,43){$x$}
\put(192,68){$1$}
\put(261,68){$x$}
\put(261,95){$1$}
\put(192,120){$x$}
\put(192,168){$S$}
\put(261,147){$x$}
\put(261,120){$x$}
\put(261,168){$T$}
\end{picture}
\includegraphics[width=10cm]{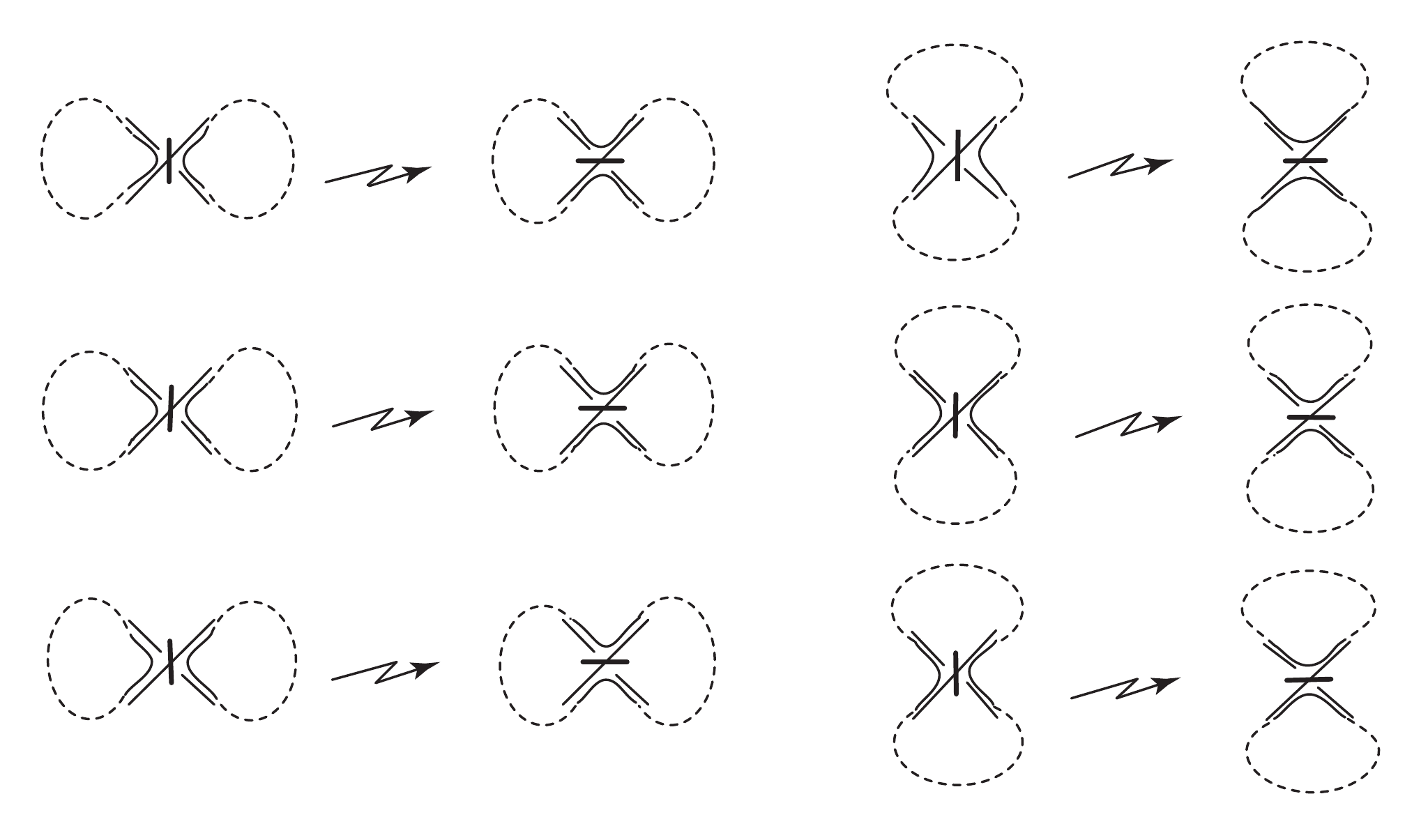}
\caption{Incidence numbers $(S:T)$.  Each $S$ is locally replaced with $T$.  The dotted arcs show how fragments of $S$ or $T$ are connected in the whole $S$ or $T$.  Using another traditional notation, we can write the above formulae as (a) $m(x \otimes 1)$ $=$ $x$, (b) $m(1 \otimes x)$ $=$ $x$, (c) $m(1 \otimes 1)$ $=$ $1$, (d) $\Delta(x)$ $=$ $x \otimes x$, and (e), (f) $\Delta(1)$ $=$ $1 \otimes x$ $+$ $x \otimes 1$.  Here, a circle of enhanced states corresponds to a module $\mathbb{Z}_2 1 \oplus \mathbb{Z}_2 x$ over $\mathbb{Z}_2$.}\label{differential}
\end{figure}
For an arbitrary enhanced state $S$, $d(S)$ is defined by
\begin{equation}
d(S) = \sum_{{\text{enhanced states}}~T} (S : T)\, T
\end{equation}
where the incidence number $(S : T)$ is unity in each of the cases listed in Fig. \ref{differential} and zero if the couple of $S$ and $T$ does not appear in the list of Fig. \ref{differential}.  The map is extended to the homomorphism $d$ from $C^{i, j}(D)$ to $C^{i+1, j}(D)$.  It is a well-known fact that $d$ is the coboundary operator, usually called the {\it{differential}} in the case of Khovanov homology; i.e., $d^{2}$ $=$ $0$.  
\begin{theorem}[Khovanov]
Let $D$ be a diagram of an arbitrary link $L$.  For arbitrary $i$ and $j$, the homology $H^{i}(C^{*, j}(D), d)$ is an isotopy invariant of $L$, and so this homology can be denoted by $H^{i, j}(L)$.  The homology $H^{i, j}(L)$ satisfies
\begin{equation}
\hat{J}(L) = \sum_{j} q^{j} \sum_{i} (-1)^{i} {\rm{rank}} H^{i, j}(L).  
\end{equation}
\end{theorem}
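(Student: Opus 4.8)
The plan is to prove the theorem in two essentially independent pieces: first, that for each bidegree $(i,j)$ the homology $H^{i}(C^{*,j}(D),d)$ is unchanged under the three Reidemeister moves, so that it depends only on the link $L$ and may be written $H^{i,j}(L)$; and second, that the graded Euler characteristic identity holds, which will turn out to be a formal consequence of the way $C^{i,j}(D)$ was built from enhanced states. Throughout I would fix the coefficient ring; over $\mathbb{Z}_{2}$ (the case used in Section~\ref{app_kh}) the sign subtleties in the definition of $d$ disappear, while over $\mathbb{Z}$ one first has to pin down a consistent sign assignment on the edges of the cube of resolutions so that $d^{2}=0$ --- a point the excerpt already records as well known, and which I would simply invoke.

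I would dispose of the Euler characteristic formula first. By construction $C^{i,j}(D)$ is free with basis the enhanced Kauffman states $S$ of $D$ satisfying $i(S)=i$ and $j(S)=j$, so
\[
\sum_{j}q^{j}\sum_{i}(-1)^{i}\,{\rm{rank}}\,C^{i,j}(D)=\sum_{{\text{enhanced states}}~S~{\text{of}}~D}(-1)^{i(S)}q^{j(S)}=\hat{J}(L).
\]
Since $d$ preserves $j$ and raises $i$ by one, each $(C^{*,j}(D),d)$ is an honest cochain complex of finitely generated groups, and the alternating sum of the ranks of its terms equals the alternating sum of the ranks of its cohomology groups $H^{i,j}(L)$. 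Weighting by $q^{j}$ and summing over $j$ yields $\hat{J}(L)=\sum_{j}q^{j}\sum_{i}(-1)^{i}\,{\rm{rank}}\,H^{i,j}(L)$. This step is routine once invariance is known.

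The substance of the proof is Reidemeister invariance, and here I would, for each move, produce an explicit chain homotopy equivalence between $C^{*,*}(D)$ and $C^{*,*}(D')$, where $D'$ is the diagram after the move, taking care that the writhe-dependent shifts in $i(S)$ and $j(S)$ match up --- the writhe changes only under $\Omega_{1}a$ and $\Omega_{1}b$, and the shift built into the definitions of $i(S)$ and $j(S)$ is exactly what compensates for this. For the moves $\Omega_{1}a$, $\Omega_{1}b$ I would resolve the single new crossing: one resolution reproduces $D'$ with an extra small circle, and ``delooping'' that circle --- writing its two labels $x$, $1$ and using the multiplication and comultiplication of Fig.~\ref{differential} --- exhibits the local two-term complex as a direct sum of a shifted copy of $C^{*,*}(D')$ and an acyclic complex, whence the homologies agree. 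For $\Omega_{2}$ I would expand the two crossings into a $2\times 2$ cube of four resolutions, locate inside it an acyclic subcomplex (or, dually, an acyclic quotient) generated by the ``turnback'' resolutions, split it off, and identify the remaining summand with the shifted complex of $D'$. For $\Omega_{3}$ I would invoke the algebraic cancellation (``Gaussian elimination'') lemma for cochain complexes: whenever a component of the differential is an isomorphism one may delete it together with its source and target at the cost of correcting the remaining matrix entries; applying this to both sides of the move --- or, alternatively, reducing $\Omega_{3}$ to repeated use of $\Omega_{2}$ together with a planar isotopy --- yields isomorphic reduced complexes.

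The hard part will be the $\Omega_{3}$ bookkeeping: keeping the sign corrections, the homological and quantum gradings, and the choice of which resolutions to cancel all mutually consistent; the $\Omega_{1}$ and $\Omega_{2}$ steps are comparatively mechanical once the delooping and acyclic-summand mechanism is in place. (If one prefers to avoid an ad hoc treatment of $\Omega_{3}$, one can instead package all three invariances through Bar-Natan's local cobordism category and then apply the tautological functor to $\mathbb{Z}_{2}$-coefficients, but for the purposes of this paper the direct verification above suffices.)
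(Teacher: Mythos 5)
The paper does not prove this theorem: it is quoted as background, with attribution to Khovanov's original article (and the Bar-Natan/Viro reformulations), so there is no in-paper argument to compare yours against. Judged on its own terms, your outline is the standard proof strategy and is essentially sound. The Euler-characteristic half is complete as written; note, though, that it does not actually depend on invariance in the way you suggest at the end of that paragraph: the identity $\hat{J}(L)=\sum_j q^j\sum_i(-1)^i\,\mathrm{rank}\,C^{i,j}(D)$ holds at the chain level by the very definition of $\hat{J}$ as a state sum over $D$, and the equality of alternating sums of ranks of chain groups and of cohomology groups is pure algebra; invariance is needed only to justify writing $H^{i,j}(L)$ in place of $H^{i,j}(D)$. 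The Reidemeister half is a plan rather than a proof: delooping for $\Omega_1$, splitting off an acyclic subcomplex for $\Omega_2$, and Gaussian elimination for $\Omega_3$ are exactly the mechanisms used in the literature, but the substance of the theorem lies in executing them --- in particular, checking that the local chain maps commute with the differentials coming from crossings outside the pictured neighbourhood, and that the writhe-dependent shifts in $i(S)$ and $j(S)$ under $\Omega_1 a$ and $\Omega_1 b$ are absorbed correctly. One aside is misleading: $\Omega_3$ cannot be reduced to repeated use of $\Omega_2$ plus planar isotopy at the level of diagrams; the legitimate version of that shortcut operates at the chain level and requires proving that the $\Omega_2$ homotopy equivalence is compatible with the cube structure, which is comparable in labour to the direct verification. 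Finally, for integer coefficients you still owe the sign assignment on the cube of resolutions making $d^2=0$ and making your local equivalences genuine chain maps; the incidence numbers of Fig.~\ref{differential} as given in the paper only pin down the $\mathbb{Z}_2$ case, which is the one the paper actually uses.
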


\subsection{Application to Khovanov homology}
Manturov extended the definition of the Khovanov homology to that of virtual knots, denoted here by $KH^{i, j}$ through adding the map between enhanced states of virtual knots.  The problem is that the change of one positive marker to define the differential does not require the change of the component enhanced states for all cases, as shown in Fig. \ref{differential}.  Fortunately, in the case of the coefficient $\mathbb{Z}_{2}$, the definition was extended to virtual knots straightforwardly by regarding these cases as zero maps and using Fig. \ref{differential}.  Moreover, Manturov found the following property \cite{manturov1}: 
\begin{theorem}[Manturov]
For ${KH}^{i, j}(K)$, the Khovanov homology of Manturov, ${KH}^{i, j}(K)$ $\simeq$ ${KH}^{i, j}(p_r(K))$ for an arbitrary virtual knot $K$.  In other words, the Khovanov homology of Manturov is invariant under virtualization of Fig. \ref{virtulization_move}.  
\end{theorem}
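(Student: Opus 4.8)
The plan is to reduce the assertion to the invariance of $KH$ under a single virtualization move and then to trace that move through the Khovanov complex over $\mathbb{Z}_2$. First I would observe that $p_r$ retains the sign of every chord and discards only its arrow, so two Gauss diagrams have the same image under $p_r$ exactly when one is obtained from the other by reversing the arrows of some set of chords, and reversing one arrow is precisely the effect on the corresponding crossing of the virtualization move of Fig.~\ref{virtulization_move}. Hence, arguing by induction on the number of reversed arrows, it is enough to prove $KH^{i,j}(D)\simeq KH^{i,j}(v_cD)$, where $v_cD$ is obtained from a diagram $D$ of $K$ by virtualizing one crossing $c$; the relation $KH^{i,j}(K)\simeq KH^{i,j}(p_r(K))$ then follows once one fixes any arrow assignment turning $p_r(K)$ back into a virtual knot.

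Next I would build a bijection $\Phi$ from the enhanced Kauffman states of $D$ to those of $v_cD$. The two diagrams differ only inside a disc around $c$, where $v_cD$ carries the crossing $c$ with its type unchanged, flanked by two virtual crossings. I would let $\Phi$ keep every marker, at $c$ and elsewhere, and every label unchanged. Since the two virtual crossings beside $c$ are never smoothed and can be detoured off the smoothed classical crossing by the virtual moves, each state of $v_cD$ produces, as a labelled family of circles, exactly the state of $D$ with the same markers and labels; in particular $\Phi$ is a bijection and preserves $\sigma(S)$ and $\tau(S)$. Because virtualization reverses only the arrow of $c$ and leaves its sign alone, the writhe $w$ is unchanged, so the formulae $i(S)=(w(D)-\sigma(S))/2$ and $j(S)=w(D)+i(S)+\tau(S)$ show at once that $\Phi$ preserves both gradings; thus $\Phi$ restricts to a graded bijection between the generating sets of $C^{*,*}(D)$ and $C^{*,*}(v_cD)$.

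The remaining, and hardest, step is to check that $\Phi$ is a chain map. Working with coefficients in $\mathbb{Z}_2$, every incidence number is $0$ or $1$ and signs are irrelevant, so it suffices to show that $\Phi$ matches, pair by pair, the couples $(S,T)$ with $(S:T)=1$. The differential decomposes into local contributions, one per crossing, prescribed by the merge/split list of Fig.~\ref{differential} together with Manturov's convention that the exceptional ``one circle to one circle'' configurations contribute the zero map. At each crossing other than $c$ the map $\Phi$ is the identity on the local picture, so those contributions correspond automatically; at $c$ one must confirm that, after the flanking virtual crossings are detoured away, the saddle attached to $c$ still joins the same circles, so that genuine merges go to genuine merges, genuine splits to genuine splits, and zero maps to zero maps. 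I expect this case analysis at $c$ to be the main obstacle: it amounts to following how the circles of a state thread through $c$ on the ambient surface, where the number of loops can behave non-classically, and it is exactly the place where passing to $\mathbb{Z}_2$ is essential, since only then does the delicate ``which side of the saddle'' distinction disappear and the trichotomy merge/split/zero become visible to $\Phi$. Granting this, $\Phi$ is an isomorphism of complexes $(C^{*,j}(D),d)\to(C^{*,j}(v_cD),d)$ for every $j$, whence $KH^{i,j}(D)\simeq KH^{i,j}(v_cD)$, and the theorem follows by the reduction above; this recovers Manturov's result \cite{manturov1}.
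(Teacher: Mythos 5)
The paper does not prove this statement at all: it is quoted from Manturov \cite{manturov1} and used as a black box, so there is no internal argument to compare yours against. Judged on its own, your outline follows the natural route (reduce to one virtualization, match enhanced states marker-by-marker and label-by-label, check gradings, check the differential), and none of its steps would fail. Two remarks. First, the step you explicitly ``grant'' --- that the saddle at the virtualized crossing $c$ joins the same circles after the flanking virtual crossings are detoured away --- is the entire content of the theorem, and it deserves to be closed rather than assumed. It follows from one observation: for a crossing of sign $+$ (resp.\ $-$) the positive marker of Fig.~\ref{markers} produces the orientation-preserving (resp.\ orientation-reversing) reconnection of the core circle of the Gauss diagram, and this correspondence depends only on the sign, never on which branch is the overpass. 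Since virtualization preserves the sign and only reverses the arrow, every state of $v_cD$ has literally the same abstract circles as the state of $D$ with the same markers (equivalently, in your planar picture the smoothed tangle is a transposition composed with the smoothing composed with a transposition, and the two virtual crossings cancel by detour moves); hence $\sigma$, $\tau$, $w$, and all incidence numbers of Fig.~\ref{differential} coincide, the exceptional one-circle-to-one-circle saddles being zero on both sides by Manturov's $\mathbb{Z}_2$ convention. With that observation your $\Phi$ is visibly an isomorphism of complexes and no further case analysis at $c$ is needed. Second, a small caveat in your reduction: two virtual knots with the same image under $p_r$ need not have diagrams differing only by arrow reversals; they are related by an interleaved sequence of Reidemeister moves and arrow reversals, so your induction must also invoke the (already assumed) Reidemeister invariance of $KH^{i,j}$, and the phrase ``fix any arrow assignment turning $p_r(K)$ back into a virtual knot'' should be read as saying that the resulting homology is independent of that choice, which is exactly what the induction yields.
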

\begin{figure}
\begin{center}
\includegraphics[width=8cm]{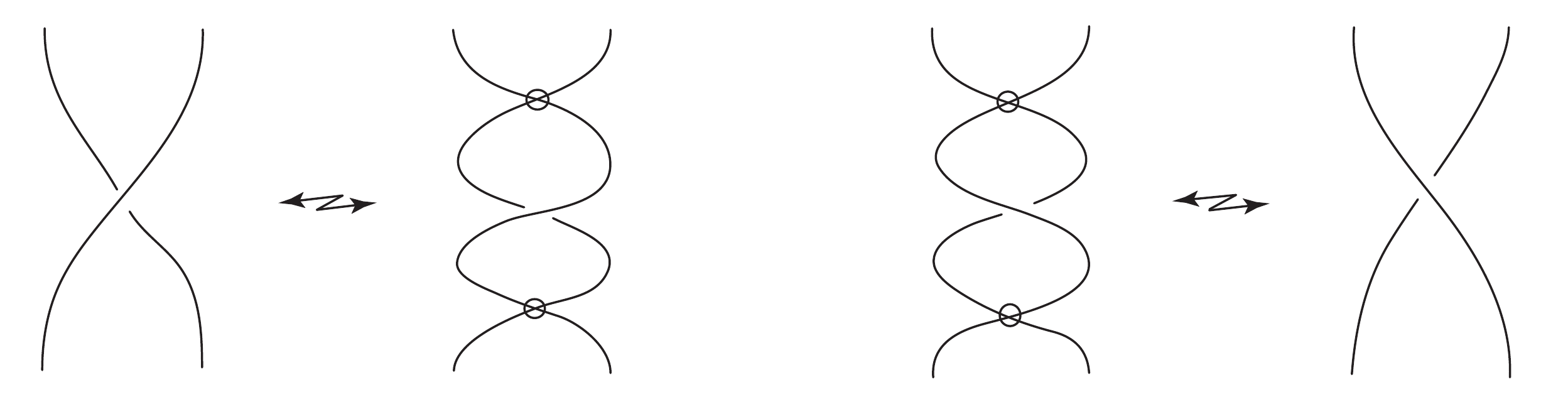}\qquad\qquad
\caption{Virtualization.  Arbitrary orientations of these moves are permitted.}\label{virtulization_move}
\end{center}
\end{figure}
Then, we have the counterpart of Theorem \ref{strongerthan}: 
\begin{theorem}\label{ip}
Let $K$ be an arbitrary long virtual knot.  A pairing of the two Khovanov homologies ${KH}^{i, j}(p_r(K))$ and ${KH}^{i, j}(i(p(K)))$ is stronger than Manturov's Khovanov homology ${KH}^{i, j}(K)$ in terms of an invariant of long virtual knots.  In other words, there exist two long virtual knots $K_1$ and $K_2$ such that ${KH}^{i, j}(p_r(K_1))$ $\simeq$ ${KH}^{i, j}(p_r(K_2))$ for any $(i, j)$ but ${KH}^{i, j}(i(p(K_1)))$ $\not\simeq$ ${KH}^{i, j}(i(p(K_2)))$ for some $(i, j)$.  
\end{theorem}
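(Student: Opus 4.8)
The plan is to lift the proof of Theorem~\ref{strongerthan} one categorical level: Manturov's theorem plays the role that Turaev's Theorem~\ref{main} played there, and the fact that Khovanov homology determines the Jones polynomial plays the role of the Jones polynomial itself.

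First I would record that $KH^{i,j}(i(p(K)))$ is a well-defined invariant of the long virtual knot $K$. In the second proof of Theorem~\ref{main_s0} it was shown that $q\circ p$ descends to a well-defined self-map of long virtual knots and that $p_r\circ q=i$; hence $i(p(K))=p_r(q(p(K)))$, and Manturov's theorem (invariance of $KH$ under $p_r$) gives $KH^{i,j}(i(p(K)))\simeq KH^{i,j}(q(p(K)))$, whose right-hand side depends only on $K$. By the same theorem $KH^{i,j}(p_r(K))\simeq KH^{i,j}(K)$, so the first coordinate of the pair already recovers Manturov's $KH^{i,j}(K)$; thus the pairing is at least as strong as $KH^{i,j}(K)$, and it only remains to exhibit a pair of long virtual knots on which the pairing separates but $KH^{i,j}$ alone does not.

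For this I would take the long virtual knots $K_1$ and $K_2$ of Example~\ref{ex_phrase} (the pair $K_3,K_4$ of Example~\ref{ex2_phrase} works just as well). The needed input is that $KH^{i,j}(K_1)\simeq KH^{i,j}(K_2)$ for all $(i,j)$; this holds because $K_2$ is $K_1$ with the reversed orientation, and reversing the orientation of a one-component diagram changes neither the enhanced-state data nor the writhe, so the two Khovanov complexes agree term by term. Consequently $KH^{i,j}(p_r(K_1))\simeq KH^{i,j}(K_1)\simeq KH^{i,j}(K_2)\simeq KH^{i,j}(p_r(K_2))$ for all $(i,j)$. On the other hand, if $KH^{i,j}(i(p(K_1)))\simeq KH^{i,j}(i(p(K_2)))$ held for every $(i,j)$, their graded Euler characteristics would coincide, i.e. $\hat{J}(i(p(K_1)))=\hat{J}(i(p(K_2)))$; dividing by $q+q^{-1}$ and substituting $q=-t^{1/2}$ would force $V_{i(p(K_1))}=V_{i(p(K_2))}$, contradicting the inequality recorded in Example~\ref{ex_phrase}. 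Hence $KH^{i,j}(i(p(K_1)))\not\simeq KH^{i,j}(i(p(K_2)))$ for some $(i,j)$, which is exactly the assertion.

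The step I expect to cause the most trouble is the bookkeeping of the second paragraph: one must make sure that $q\circ p$, whose well-definedness was established only at the level of long virtual knots, is compatible with Manturov's construction — which is defined only over $\mathbb{Z}_2$ and treats certain cases of Fig.~\ref{differential} as zero maps — so that the displayed chain of isomorphisms is legitimate, and that the graded Euler characteristic of Manturov's $\mathbb{Z}_2$ Khovanov homology of a virtual knot is still the unnormalized Jones polynomial $\hat{J}$. Once those points are granted, the argument reduces to the categorification of the Jones polynomial applied to Example~\ref{ex_phrase}, together with the elementary fact that reversing the orientation of a one-component diagram preserves $KH^{i,j}$.
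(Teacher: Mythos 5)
Your argument is correct, and it uses the same witness pair $K_1,K_2$ as the paper, but the decisive step is carried out differently. The paper's proof is a one-line appeal to Example~\ref{ex_k1_k2}: it asserts $KH^{i,j}(K_1)\simeq KH^{i,j}(K_2)$ ``by definition'' (as you correctly interpret, $K_2$ is $K_1$ with reversed orientation, which leaves the crossing signs, markers, gradings and differential untouched) and then distinguishes the images under $i\circ p$ by an explicit homology computation in a single bidegree, $KH^{-2,-5}(i(p(K_1)))\simeq\mathbb{Z}_2$ versus $KH^{-2,-5}(i(p(K_2)))\simeq 0$. You instead deduce the non-isomorphism indirectly: since Manturov's $\mathbb{Z}_2$ complex has the same bigraded generators (enhanced states) and a $j$-preserving, $i$-raising differential, its graded Euler characteristic is still $\hat{J}$, so a global isomorphism $KH^{i,j}(i(p(K_1)))\simeq KH^{i,j}(i(p(K_2)))$ would force $V_{i(p(K_1))}=V_{i(p(K_2))}$, contradicting Example~\ref{ex_phrase}. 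This buys you robustness — no homology computation is needed, and the Khovanov statement is formally reduced to the already-established Jones statement via categorification — at the cost of needing the (true, but not stated in the paper) fact that the Euler characteristic identity persists for Manturov's $\mathbb{Z}_2$ theory on virtual diagrams, and of losing the explicit bidegree $(-2,-5)$ that the paper computes and then reuses in Examples~\ref{ex_pra} and~\ref{ia}. Your preliminary paragraph on well-definedness of $KH^{i,j}(i(p(K)))$ via $i=p_r\circ q$ and Manturov's theorem matches the paper's implicit reasoning; the only caveats worth flagging are that your identification of $K_2$ as $-K_1$ is read off from the figures/Remark~\ref{ori} rather than stated in the text, and that over $\mathbb{Z}_2$ ``rank'' in the Euler characteristic formula must be read as $\mathbb{Z}_2$-dimension — both minor and consistent with the paper.
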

\begin{proof}
Example \ref{ex_k1_k2} gives what needs to be shown.  
\end{proof}

\begin{ex}\label{ex_k1_k2}
By definition, ${KH}^{i, j}(K_1)$ $\simeq$ ${KH}^{i, j}(K_2)$ for any $(i, j)$.  However, ${KH}^{-2, -5}(i(p(K_1))$ $\simeq$ $\mathbb{Z}_2$ and ${KH}^{-2, -5}(i(p(K_2))$ $\simeq$ $0$. 
\end{ex}

\begin{ex}\label{ex_pra}
Let us consider another type of $p_r$ denoted by $p_{ra}$.  We have ${KH}^{2, -5}$ $(p_r(K_1))$ $\simeq$ ${KH}^{2, -5}(p_r(\emptyset))$ $\simeq$ $0$.  However, ${KH}^{2, -5}(p_{ra}(K_1))$ $\simeq$ $\mathbb{Z}_{2}$, which is not $0$ $\simeq$ $KH^{2, -5}(p_{ra}(\emptyset))$.  
\end{ex}
 
\begin{ex}\label{ia}
Let us consider another type of $i$ denoted by $i_a$.  As described above, ${KH}^{-2, -5}(i(p(K_2)))$ $\simeq$ ${KH}^{-2, -5}(i(p(\emptyset)))$ $\simeq$ $0$.  However, ${KH}^{-2, -5}(i_a(p(K_2)))$ $\simeq$ $\mathbb{Z}_2$, which is not $0$ $\simeq$ ${KH}^{-2, -5}(i_a(p(\emptyset)))$.  
\end{ex}

We also have the counterpart of Theorem \ref{four_thm}: 
\begin{theorem}
All the choices of $p_r$, $p_{ra}$, $p$, $p_a$, $i$, and $i_a$ together generate four types of the Khovanov homology for long virtual knots.  

As a corollary of Theorem \ref{ip}, the tuple of four Khovanov homologies is stronger than the Khovanov homology $KH^{i, j}$ in terms of long virtual knots.  
\end{theorem}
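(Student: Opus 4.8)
The plan is to mirror the proof of Theorem \ref{four_thm} at the level of Khovanov homology and then read off the corollary from Theorem \ref{ip}. First I would enumerate the candidate invariants obtained by applying Manturov's $\mathbb{Z}_2$-Khovanov homology $KH^{i,j}$ after the well-defined maps on equivalence classes of Gauss diagrams that we have introduced. Because $p$ and $p_a$ land in open flat virtual knots rather than in (long) pseudolinks, they cannot be used on their own; because $p_r$ and $p_{ra}$ act on sign-labelled Gauss diagrams while $p,p_a$ produce $\{a,b\}$-labelled Gauss diagrams with base points, $p_r$ and $p_{ra}$ admit no further legitimate post-composition. The legitimate composites whose target carries Manturov's $\mathbb{Z}_2$-Khovanov homology (base points neglected) are therefore exactly $KH^{i,j}(p_r(\cdot))$, $KH^{i,j}(p_{ra}(\cdot))$, $KH^{i,j}(i\circ p(\cdot))$, $KH^{i,j}(i\circ p_a(\cdot))$, $KH^{i,j}(i_a\circ p(\cdot))$, and $KH^{i,j}(i_a\circ p_a(\cdot))$.

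Second, I would record the two identities $i\circ p_a=i_a\circ p$ and $i_a\circ p_a=i\circ p$, which hold for the same reason as in Theorem \ref{four_thm}: since $p_a=\tau_0\circ p$, $i_a=\tau_1\circ i$, and the chord substitutions $\tau_0:a\leftrightarrow b$, $\tau_1:+\leftrightarrow -$, $i:a\mapsto +,\ b\mapsto -$ satisfy $i\circ\tau_0=\tau_1\circ i$, one gets $i\circ p_a=i\circ\tau_0\circ p=\tau_1\circ i\circ p=i_a\circ p$ and $i_a\circ p_a=\tau_1\circ i\circ\tau_0\circ p=i\circ p$. Hence the six-element list collapses to precisely four distinct invariants, $KH^{i,j}(p_r(\cdot))$, $KH^{i,j}(p_{ra}(\cdot))$, $KH^{i,j}(i\circ p(\cdot))$, $KH^{i,j}(i_a\circ p(\cdot))$. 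Each is an invariant of long virtual knots: the constituent maps descend to the relevant equivalence classes of Gauss diagrams (as already noted in the text, and, for $i\circ p$, as established through the map $q\circ p$ in the second proof of Theorem \ref{main_s0}), and $KH^{i,j}$ is, by Manturov's theorems, a virtual-knot invariant that is moreover invariant under the $p_r$-relation (virtualization), so it passes to each composite.

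Finally, for the corollary I would observe that $KH^{i,j}(p_r(K))\simeq KH^{i,j}(K)$ by Manturov's theorem, so the four-tuple contains Manturov's Khovanov homology as one of its components and therefore refines it; and by Theorem \ref{ip} there are long virtual knots $K_1,K_2$ with $KH^{i,j}(p_r(K_1))\simeq KH^{i,j}(p_r(K_2))$ for all $(i,j)$ yet $KH^{i,j}(i\circ p(K_1))\not\simeq KH^{i,j}(i\circ p(K_2))$ for some $(i,j)$. Thus the sub-pair $\bigl(KH^{i,j}(p_r(\cdot)),\,KH^{i,j}(i\circ p(\cdot))\bigr)$, and a fortiori the full four-tuple, distinguishes long virtual knots that $KH^{i,j}$ cannot, which is the claimed strict refinement.

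The main obstacle I anticipate is the bookkeeping in the first step: one must keep careful track of which maps compose legitimately, given the differing labellings (signs versus $a,b$), the presence or absence of base points, and the distinct equivalence relations each target carries, and one must confirm that after applying $i$ or $i_a$ the output genuinely lies in the category on which Manturov's $\mathbb{Z}_2$-Khovanov homology is both defined and $p_r$-invariant. The explicit separating computation underlying the corollary is not carried out here but deferred to Example \ref{ex_k1_k2}.
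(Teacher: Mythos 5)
Your proposal is correct and follows essentially the same route as the paper: the paper's (implicit) argument is exactly the collapse of the six maps to four invariants via the identities $i \circ p_a = i_a \circ p$ and $i_a \circ p_a = i \circ p$ (stated without derivation in the proof of Theorem \ref{four_thm}, which you rederive from $i \circ \tau_0 = \tau_1 \circ i$), combined with the separating example of Theorem \ref{ip} for the strictness claim. Your write-up is in fact more explicit than the paper's, which offers no proof for this theorem beyond the cross-reference.
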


As in Sec. \ref{ver_jones}, four invariants $KH^{i, j}(p_r (D))$, $KH^{i, j}(p_{ra} (D))$, $KH^{i, j}(i \circ p (D))$, and $KH^{i, j}(i_a \circ p (D))$ means considering Khovanov homology for four long virtual knot diagrams $D$, $D^{*}$, $q \circ p(D)$, and $\left( q \circ p(D) \right)^{*}$.

\section*{Acknowledgements}
The author thanks Professor Kouki Taniyama and the referee for useful comments on an earlier version of this paper.  This work was partly supported by Grant-in-Aid for Young Scientists (B) (23740062), IRTG 1529, and a Waseda University Grant for Special Research Projects (Project number: 2010A-863).

\end{document}